\newcommand{\be}{\begin{equation}}
\newcommand{\ee}{\end{equation}}
\newtheorem{theorem}{Theorem}
\newtheorem{lemma}[theorem]{Lemma}
\newtheorem{corollary}[theorem]{Corollary}
\newtheorem{proposition}[theorem]{Proposition}
\theoremstyle{definition}
\newtheorem{definition}{Definition}
\theoremstyle{remark}
\newtheorem{remark}{Remark}
\definecolor{DarkBlue}{rgb}{0,0.1,0.55}
\numberwithin{equation}{section}
\newcommand {\hide}[1]{}
\newcommand {\junk}[1]{}
\newcommand {\R} {\mathbb{R}}
\newcommand {\Z}  {\mathbb{Z}}
\newcommand {\eps} {{\varepsilon}}
\newcommand {\PP}     {\mathbb{P}} %projective space
\def\addots{\mathinner{\mkern1mu
\raise1pt\vbox{\kern7pt\hbox{.}}
\mkern2mu\raise4pt\hbox{.}\mkern2mu
\raise7pt\hbox{.}\mkern1mu}}
\newcommand{\HH}  {\mbox{\rm H}}
\newcommand {\RP}   {\mathbb{R}\mathrm{P}}
\newcommand{\beq}{\begin{equation}}
\newcommand{\eeq}{\begin{equation}}
\newcommand{\Ker}[1]{\mathrm{Ker }\left(#1\right)}
\newcommand{\Img}[1]{\mathrm{Im }\left(#1\right)}
\newcommand*{\EX}[1]{\mathbb{E}\left[#1\right]}
\newcommand*{\PR}[1]{\mathbb{P}\left[#1\right]}
\newcommand*{\condition}{\,\middle|\,}
\newcommand{\indicator}[1]{\ensuremath{\mathds{1}\left\{#1\right\}}}
\newcommand{\bigslant}[2]{{\raisebox{.2em}{$#1$}\left/\raisebox{-.2em}{$#2$}\right.}}
\newcommand*{\de}[1]{\,\mathrm{d}#1}
\newcommand{\eqcomment}[1]{\hfill &&\text{(#1)}}
\newcommand{\Pos}{\mathcal{P}}
\newcommand{\graph}{\mathcal{G}}
\newcommand{\QQ}{\mathcal{Q}}
\newcommand{\vol}[1]{\mathrm{vol}\left(#1\right)}
\newif\ifnotes\notestrue %set this to true if notes are visible and to false (next line) if they should be hidden
\newcommand{\spherecolor}{CornflowerBlue!70!white}
\newcommand{\sphereopacity}{1}
\newcommand{\edgecolor}{green}
\newcommand{\nonedgecolor}{red}
\newcommand{\pncolor}{yellow}
\newcommand{\pnopacity}{1.0}
\newcommand{\pntildeopacity}{0.4}
\newcommand{\smallergoodconecolor}{Brown}
\newcommand{\goodconecolor}{green!30!white}
\newcommand{\goodconeopacity}{0.5}
\tikzset{
    invclip/.style={
        clip,
        insert path={{[reset cm](-16383.99999pt,-16383.99999pt) rectangle (16383.99999pt,16383.99999pt)}}
    }
}
\pgfplotsset{
  compat=newest,
  xlabel near ticks,
  ylabel near ticks
}
\newcommand{\geodesicA}[6]{
          \begin{scope}[rotate=#5]
            \path[#6,evaluate={
										\startangle = (#1);	\endangle = (#2);
										\xradius = (#3);		\yradius = (#4);
										\startx = \xradius * cos(\startangle);
										\starty = \yradius * sin(\startangle);
									},] (\startx,\starty) arc [	start angle=\startangle,
																		end angle=\endangle,
															            x radius=\xradius,
															            y radius=\yradius] ;
          \end{scope}
}
\newcommand{\vertex}[4]{
          \begin{scope}[rotate=#4]
		    \fill[evaluate={
										\xxx = (#1) * cos(#3);
										\yyy = (#2) * sin(#3);
									},] (\xxx,\yyy) circle (1pt);
          \end{scope}
}
\newcommand{\nonedgeA}[5]{\geodesicA{#1}{#2}{#3}{#4}{#5}{draw,thick,dotted,\nonedgecolor}} 
\newcommand{\edgeA}[5]{\geodesicA{#1}{#2}{#3}{#4}{#5}{draw,thick,\edgecolor}}
\newcommand{\@giventhatstar}[2]{\ensuremath{\left({#1}\;\middle|\;{#2}\right)}} 
\newcommand{\@giventhatnostar}[3][]{#1(#2\;#1|\;#3#1)} 
\newcommand{\giventhat}{\@ifstar\@giventhatstar\@giventhatnostar} 
\begin{document}

\title{Betti numbers of random hypersurface arrangements}

\author{Saugata Basu}	
\address{Purdue University, West Lafayette, IN 47906, U.S.A.}
\email{sbasu@math.purdue.edu}

\author{Antonio Lerario}
\address{SISSA (Trieste) and Florida Atlantic University}
\email{lerario@sissa.it}

\author{Abhiram Natarajan}
\address{Purdue University, West Lafayette, IN 47906, U.S.A.}
\email{nataraj2@purdue.edu}

\subjclass{Primary 14F25; Secondary 68W30}
\date{\textbf{\today}} 

\thanks{Saugata Basu was partially supported by NSF grants CCF-1618981, DMS-1620271 and CCF-1910441. Abhiram Natarajan was supported by NSF grant CCF-1910441.}

\begin{abstract}
We study the expected behavior of the Betti numbers of arrangements of the zeros of random (distributed
according to the Kostlan distribution) polynomials in $\RP^n$. Using a random spectral sequence, we prove an asymptotically exact
estimate on the expected number of connected components in the complement of $s$ such hypersurfaces
in $\RP^n$. We also investigate the same problem in the case where the hypersurfaces are defined by random quadratic polynomials. In this case, we establish a connection between the Betti numbers of such arrangements with the expected behavior of a certain model of a randomly defined geometric graph. While our general result implies that the average zeroth Betti number of the union of random hypersurface arrangements is bounded from above by a function that grows linearly in the number of polynomials in the arrangement, using the connection with random graphs, we show an upper bound on the expected zeroth Betti number of random quadrics arrangements that is sublinear in the number of polynomials in the arrangement. This bound is a consequence of a general result on the expected number of connected components in our random graph model which could be of independent interest. 
\end{abstract}

\maketitle 

\section{Introduction}

The quantitative study of the `complexity' of arrangements of  hypersurfaces in some finite dimensional real space has a fairly long history in the area of discrete and computational
geometry (see \cite{Agarwal} for a survey). The main mathematical results concern
the combinatorial, as well as topological, complexities of the so called `cells' of the arrangement.
A cell of an arrangement refers to a connected component of any set obtained as the  intersection of a subset of the given hypersurfaces with the complements of the remaining hypersurfaces (so by definition a cell is always locally closed and a full dimensional cell is open). It is worth recalling some 
of these results.

Given a set of $s$ real algebraic hypersufaces in $\R^n$ each defined by a polynomial of degree
at most $d$, it was proved in \cite{B00} that for each $i, 0 \leq i < n$, the sum over all cells of the
arrangement of the $i$-th Betti number of the cells is bounded from above by $s^{n-i} O(d)^n$. Taking $i=0$, one obtains an upper bound of $s^n O(d)^n$ on the number of cells of the arrangement.

The above results are deterministic. Recently, the study of the \emph{expected} topology
of real varieties or semi-algebraic sets defined by randomly chosen real polynomials
has assumed significance (see for example, \cite{gayet2015expected,FLL,PSC}).
In this paper we initiate the study of quantitative properties of arrangements of 
real hypersurfaces from a random viewpoint in the same spirit as in the papers
referred to above. We study the topological complexity of arrangements of
$s$ randomly chosen hypersurfaces of degrees $d_1,\ldots,d_s$. The probability measure on the space of polynomials, according to which the polynomials are chosen, is the well known Kostlan distribution, which is a Gaussian distribution on the real vector space of homogeneous polynomials of a fixed degree (equipped with an inner product) \cite{EdelmanKostlan95, Kostlan}. Specifically, on the space of homogenous polynomials of degree $d$ in $n+1$ variables, a Kostlan form is defined as
\be
P(x) = \sum_{\substack{(\alpha_0, \ldots, \alpha_n) \\ \sum_{i=0}^n \alpha_i = d}} \xi_{\alpha} x_0^{\alpha_0}\ldots x_n^{\alpha_n},
\ee
where $\xi_{\alpha} \sim \mathcal{N}\left(0, \frac{d!}{\alpha_0! \ldots \alpha_n!}\right)$ are independently chosen. The variances are chosen in such a way that the resulting probability distribution is invariant under an orthogonal change of variables, meaning that there are no preferred points or direction in $\RP^n$, where the zeros of $p$ are naturally defined. Moreover, if we extend this distribution to the space of complex polynomials by replacing real with complex Gaussian variables, it can be shown that this extension is the unique (up to multiples) Gaussian measure which is invariant under unitary change of variables, thus making real Kostlan polynomials a natural object of study.

Here we deviate slightly from the usual convention in the literature in discrete and computational geometry, and consider arrangements of hypersurfaces in real projective space $\RP^n$ rather than in $\R^n$ (since the orthogonal invariance of the Kostlan measure is meaningful only
over the projective space). However, asymptotically it does not make a difference, whether we consider arrangements over affine or projective spaces.  

We consider two variants of the problem of bounding the topological complexity
of an arrangement of random real algebraic hypersurfaces in $\RP^n$ with specified degrees. Our first result outlined in \S \ref{subsec:general} treats the problem in full generality
without any restriction on the degrees (cf. Theorem~\ref{thm:random-arrangement-topology}). We then study the case when all the degrees are assumed to be equal to $2$ (outlined in \S \ref{subsec:quadric}). This is the first
non-trivial case, since for an arrangement of hyperplanes (i.e. with all degrees equal to one), the expected value of the topological
complexity will coincide with that of deterministic generic arrangements. Since, it is known that the growth of the Betti numbers of semi-agebraic sets defined by quadratic polynomials show different behavior compared to that of general semi-algebraic sets 
(see \cite{Bar97, BP'R07jems, Lerario, Basu-Rizzie} for the deterministic case and 
\cite{Letwo, lerario2016gap} in the random setting), it could be expected that the average
topological complexity of arrangements consisting of quadric hypersurfaces would be smaller
than in the general case (at least in the dependence on the number $s$ of hypersurfaces).
We have partial results (outlined in \S \ref{subsec:quadric}) showing that this is indeed the case.
While the $(n-1)$-dimensional Betti number of the complement of a union
$s$ hypersurfaces of degree $d \ge 2$ in $\RP^n$ grows proportionally with $s$ in the deterministic case, we show that in the random case with $d=2$ the expected value of the same is $o(s)$ (cf. Theorem~\ref{thm:quadrics}).

In order to prove Theorem~\ref{thm:quadrics}, we study the behavior 
of a special kind of geometrically defined graph from a random viewpoint (outlined in \S \ref{subsec:graph}).
The geometric graph that we study is a special case of the more general graphs defined 
by semi-algebraic relations which has been widely studied in combinatorics  (see for example \cite{Alon-Pach-et-al}).
In our case the semi-algebraic relation defining the graph is particularly simple and geometric,
and hence we believe that study of
this model could be of interest by itself. We fix a convex semi-algebraic subset  
subset $\mathcal{P}\subset \RP^N$ and sample independent points 
$q_1, \ldots, q_s$ from the uniform distribution on $\RP^N$, and we put an edge between $v_i$ and $v_j$, if and only if $i\neq j$ and the line connecting $q_i$ and $q_j$ \emph{does not} intersect $\mathcal{P}$.
We give a tight estimate on the expected number of isolated points of such a graph (cf. Theorem~\ref{thm:conn-comp-vis-graphintro}), from which we can 
deduce Theorem~\ref{thm:quadrics}. Finally, we conclude by proving a Ramsey-type result about the random graph of quadrics (cf. Corollary \ref{cor:quadrics-graph-ramsey}).
 
\subsection{Random hypersurface arrangements}
\label{subsec:general} 

We are given random homogenous polynomials $P_1, \ldots, P_s$, where each $P_i \in \R[x_0, \ldots, x_n]_{(d_i)}$, and we look at the \emph{random} arrangement of hypersurfaces defined in the projective space by the zero sets of these polynomials, i.e.,
\be\label{eq:Gamma} \Gamma=\bigcup_{j=1}^s\Gamma_j\subset \RP^n,\ee
where each $\Gamma_j$ is  the real algebraic hypersurface given by the zero set of $P_j$, i.e.,
\be \Gamma_j=Z(P_j)=\{[x_0, \ldots, x_n] \in \RP^n \mid P_j(x_0, \ldots, x_n)=0\}.\ee
The main problem that we want to address concerns understanding the topological complexity of $\Gamma$, which will be measured by its Betti numbers\footnote{For a semialgebraic set $S$ we denote by $b_i(S)$ its $i^{\text{th}}$ Betti number with coefficients in $\mathbb{Z}/2\mathbb{Z}$}.

We observe that there are three sets of parameters that will play a role in our study: the degree sequence $d_1, \ldots, d_s$ of the hypersurfaces, the dimension $n$ of the ambient projective space and the number $s$ of independent hypersurfaces.
(Of course, the choice of what is meant by \emph{random} will also play a role: for us the polynomials $P_1, \ldots, P_s$ will be independent samples from the Kostlan ensemble.)

Our first result concerns the asymptotic when $n$ is kept fixed and $d_1, \ldots, d_s, s\to \infty$ and gives information on the number of cells of $\RP^n\backslash \Gamma$. There is clearly an analogous statement for the spherical version of this problem, and the two cases can be related using standard techniques from algebraic topology (the spherical arrangement double covers the projective one and the asymptotics, up to a factor of two, are the same).

\begin{theorem}[$n$ fixed]
\label{thm:random-arrangement-topology}
Let $P_1, \ldots, P_s\in \R[x_0, \ldots, x_n]$ be random, independent, Kostlan polynomials, where $P_i$ has degree $d_i$. Let $\Gamma_i \subset \RP^n$ be the zero set of $P_i$, and define $\Gamma = \bigcup_{i=1}^s \Gamma_i$. Also, let $d = \max{(d_1, \ldots, d_s)}$. Then:
\be
\label{eqn:b0-general} 
\EX{b_0(\RP^n \setminus \Gamma)} = \sum_{\substack{I \subset [s] \\ |I| = n}} \sqrt{\prod_{i \in I} d_i}+O(d^{\nicefrac{(n-1)}{2}}s^{n-1}).
\ee
Moreover if all the degrees are the same $d_1=\cdots=d_s=d$ we have:
\be \label{eq:b0} \EX{b_0(\RP^n \setminus \Gamma)}=\binom{s}{n}d^{\nicefrac{n}{2}}+O(d^{\nicefrac{(n-1)}{2}}s^{n-1}). \ee
\end{theorem}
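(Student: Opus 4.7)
The plan is to reduce the count of chambers of the arrangement to a count of its $0$-dimensional strata and then evaluate the expectation of the latter using the Shub--Smale formula for the Kostlan ensemble. For $I\subset[s]$ set $\Gamma_I=\bigcap_{i\in I}\Gamma_i$; since the $P_i$ are independent Kostlan polynomials the family $\{P_i\}$ is almost surely transverse, so $\Gamma_I$ is a smooth complete intersection of codimension $|I|$, empty when $|I|>n$ and a finite point set when $|I|=n$. I would choose a generic Morse function $f\colon\RP^n\to\R$ (e.g., $f([x])=\langle a,x\rangle^2/\langle x,x\rangle$ for a generic covector $a$) that restricts to a Morse function on each stratum, and apply a stratified Morse-theoretic / Zaslavsky-type counting argument to obtain
\[
b_0(\RP^n\setminus\Gamma)\;\le\;\sum_{I\subset[s]}\#\,\mathrm{Crit}(f|_{\Gamma_I}),
\]
with equality to leading order, since in the generic setting each chamber is witnessed by the unique lowest critical point of $f$ on some boundary stratum in its closure. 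Equivalently, the counting can be organised via the Mayer--Vietoris spectral sequence associated to the cover $\Gamma=\bigcup_i\Gamma_i$, whose $E_1$ page records the Betti numbers of all intersections $\Gamma_I$ (the ``random spectral sequence'' alluded to in the abstract).

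Taking expectations, the top-codimension strata dominate. For $|I|=n$ the Shub--Smale/Kostlan theorem gives $\mathbb{E}[\#\Gamma_I]=\sqrt{\prod_{i\in I}d_i}$, and summing over the $\binom{s}{n}$ such index sets yields exactly the leading term $\sum_{|I|=n}\sqrt{\prod_{i\in I}d_i}$ in~\eqref{eqn:b0-general}; specialising $d_i=d$ then gives~\eqref{eq:b0}. For strata of codimension $k<n$, a Kac--Rice computation (or a Gayet--Welschinger--type bound on expected Betti numbers of random Kostlan complete intersections) yields
\[
\mathbb{E}\bigl[\#\,\mathrm{Crit}(f|_{\Gamma_I})\bigr]=O\Bigl(\sqrt{\textstyle\prod_{i\in I}d_i}\Bigr)=O(d^{k/2})
\]
uniformly in $I$ with $|I|=k$, so the total contribution of codimension-$k$ strata is $O(s^k d^{k/2})$; the worst case $k=n-1$ produces the stated error term $O(s^{n-1}d^{(n-1)/2})$. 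Contributions to $b_0(\RP^n\setminus\Gamma)$ coming from higher Betti numbers of the arrangement (rather than from the direct chamber count) are absorbed into the same estimate, as they appear on later pages of the spectral sequence and are bounded by the same $\mathbb{E}[b_\ast(\Gamma_I)]$ quantities.

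The main obstacle will be the \emph{sharpness} of the leading-order relationship between chambers and $0$-dimensional strata: one must verify that, almost surely, the Morse-theoretic upper bound is tight up to an additive term of the claimed order, so that the leading coefficient is exactly $\sum_{|I|=n}\sqrt{\prod d_i}$ and not some non-trivial multiple of it. A secondary technical point is the passage between $\RP^n$ and its double cover $S^n$, where the Shub--Smale formula is most naturally stated, since the antipodal action on each stratum requires tracking a factor of~$2$ that must already be built into the stated leading constant; and one must check uniformly over all $\binom{s}{k}$ subsets $I$ that the Kac--Rice integrals defining $\mathbb{E}[\#\,\mathrm{Crit}(f|_{\Gamma_I})]$ can be controlled in terms of the degrees alone, independently of the particular subset~$I$.
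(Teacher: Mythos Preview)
Your proposal assembles the right ingredients (Shub--Smale for the leading term, Gayet--Welschinger for the error, the Mayer--Vietoris spectral sequence of the cover $\Gamma=\bigcup_i\Gamma_i$) and correctly isolates the real difficulty, namely the sharp lower bound. The paper, however, takes a cleaner route that dissolves exactly the ``main obstacle'' you flag. Rather than counting chambers via stratified Morse theory and then arguing that the count is tight to leading order, the paper first invokes Alexander--Pontryagin duality, $b_0(S^n\setminus\Gamma)=b_{n-1}(\Gamma)+1$, and then reads off $b_{n-1}(\Gamma)$ from the spectral sequence directly. The leading contribution sits at $E_1^{n-1,0}=\bigoplus_{|I|=n}H^0(\Gamma_I)$, whose expected rank is exactly $2\sum_{|I|=n}\sqrt{\prod_{i\in I}d_i}$ on the sphere by Shub--Smale; the remaining entries on the anti-diagonal $p+q=n-1$ are bounded by Gayet--Welschinger. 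The matching lower bound then comes for free from the spectral-sequence algebra: since each $E_{r+1}^{p,q}$ is a subquotient of $E_r^{p,q}$, one has the telescoping inequality
\[
e_\infty^{n-1,0}\;\ge\;e_1^{n-1,0}-\sum_{i=0}^{n-2}e_1^{i,\,n-2-i},
\]
and every subtracted term is already of error size. No separate geometric tightness argument is needed, and no Kac--Rice computation for critical points on lower-codimension strata enters at all.

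One small correction to your error analysis: for $|I|=k<n$ the Gayet--Welschinger bound on $\EX{b_q(\Gamma_I)}$ is $O\bigl(d^{(n-k)/2}\bigr)$, the exponent being half the \emph{dimension} of $\Gamma_I$, not half its codimension as in your $O(d^{k/2})$. Thus the $k$-th contribution is $\binom{s}{k}O(d^{(n-k)/2})$ rather than $O(s^k d^{k/2})$. Both families of terms are dominated by $O(s^{n-1}d^{(n-1)/2})$, so your final error survives, but the individual estimates are not interchangeable.
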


\begin{remark}As we will prove in Corollary \ref{cor:random-arrangement-topology}, the expectation of the total Betti number of $\RP^n\backslash \Gamma$ has the same order as that of the expected number of connected components (cf. Equation \eqref{eq:b0}). This suggests an interesting phenomenon: the total amount of topology in $\RP^n\backslash \Gamma$ is the same (to the leading order) as the total number of cells of $\RP^n\backslash \Gamma$ and it is therefore natural to conjecture that a random cell is on average homologically a point --- but unfortunately we were not able to prove this result. It is also interesting to compare the previous statement with its worst possible deterministic bound from \cite{BPRcells}:
\be
b_0(\RP^n\backslash \Gamma)\leq {s\choose n}\left(O(d)\right)^{n}.\ee
\end{remark}

\begin{remark}It is possible to produce estimates for the expected number of cells also for other invariant distributions (classified in \cite{Kostlan}), and the answer is given in terms of the parameter of the distribution. In general it is no longer true that we obtain an estimate where the leading term in $d$ is of the type $O(d^{n/2})$, for instance sampling random harmonic polynomials of degree $d$, we get an estimate of the type:
\be\label{eq:harmonic} \EX{b_0(\RP^n \setminus \Gamma)}=\Theta\left(d^ns^n\right).\ee
We sketch a proof of this estimate in Remark \ref{remark:bu} below.
\end{remark}

\subsection{Arrangements of random quadrics} 
\label{subsec:quadric}

%The proof of the previous Theorem uses an argument involving a random spectral sequence, but it gives little information on the top Betti number of $\RP^n\backslash \Gamma$, i.e. on the number of connected components of $\Gamma$ (). 
The, next result deals instead with the asymptotic structure of $\Gamma$ when $d_1, \ldots, d_s = 2$, $n$ is fixed, and $s\to \infty$. It turns out that in this case, the problem of understanding the number of connected components of $\Gamma$, i.e. $b_0(\Gamma)$ (Betti numbers of $\Gamma$ and $\RP^n\backslash \Gamma$ are related by the Alexander-Pontryiagin duality), is related to the connectivity of a certain random graph model, and can be studied in a precise way. Specifically, our second theorem gives an upper bound on the average number of connected components in a random arrangement of quadrics' zero sets.

\begin{theorem}[$n\geq 2$ fixed, $s \to \infty$]
\label{thm:quadrics}
Let $P_1, \ldots, P_s \in \R[X_0, \ldots, X_n]$ be homogeneous Kostlan quadrics, and $n\geq 2$. Let $\Gamma_i \subset \RP^n$ be the zero set of $P_i$, and define $\Gamma = \bigcup_{i=1}^s \Gamma_i$. Then
\be \lim_{s \to \infty} \frac{\EX{b_0(\Gamma)}}{s} = 0.\ee
\end{theorem}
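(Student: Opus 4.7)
The plan is to translate the computation of $b_0(\Gamma)$ into a question about connected components of a random geometric graph of the kind described in \S\ref{subsec:graph}, and then invoke Theorem~\ref{thm:conn-comp-vis-graphintro}. Throughout I assume $n\ge 2$; the case $n=1$ is trivial.

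First, I would show that almost surely each nonempty Kostlan quadric $\Gamma_i\subset\RP^n$ is connected. With probability one $P_i$ is nondegenerate of some signature $(p,q)$ with $p+q=n+1$; if $pq=0$ then $\Gamma_i=\emptyset$, while if $p,q\ge 1$ then the real zero set on $S^n$ is diffeomorphic to $S^{p-1}\times S^{q-1}$, and its image in $\RP^n$ under the antipodal quotient is connected in every case (even when $p=1$, since the antipode identifies the two sheets of $S^0\times S^{q-1}$). Thus $b_0(\Gamma_i)\in\{0,1\}$ almost surely, and if $G$ denotes the random graph on $[s]$ with $i\sim j$ iff $\Gamma_i\cap\Gamma_j\neq\emptyset$, then
\[
b_0(\Gamma) \;\le\; \#\{\text{connected components of }G\}.
\]

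Next, set $N=\binom{n+2}{2}-1$, identify the projectivisation of the space of real quadratic forms in $n+1$ variables with $\RP^N$, and let $\mathcal{P}\subset\RP^N$ be the image of the open convex cone of definite forms. For $n\ge 2$ Dines' theorem on the convexity of the joint numerical range $\{(P_i(x),P_j(x)):x\in S^n\}\subset\R^2$ yields
\[
\Gamma_i\cap\Gamma_j\neq\emptyset \;\iff\; \text{the projective line through }[P_i]\text{ and }[P_j]\text{ in }\RP^N\text{ does not meet }\mathcal{P},
\]
so $G$ is precisely the visibility graph of \S\ref{subsec:graph} associated with $\mathcal{P}$, the only mismatch with Theorem~\ref{thm:conn-comp-vis-graphintro} being that the vertices are sampled from the push-forward of the Kostlan law rather than from the uniform measure on $\RP^N$. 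Since that push-forward is $O(n+1)$-invariant and absolutely continuous with respect to the Fubini--Study measure, the isolated-vertex estimate of Theorem~\ref{thm:conn-comp-vis-graphintro} transfers to our setting up to bounded density corrections. Finally, one upgrades this into a bound on $\EX{\#\{\text{components of }G\}}$: a first/second moment argument, using that for any non-definite $[P_i]$ the set of $[P_j]$ producing an edge (the complement in $\RP^N$ of the cone with apex $[P_i]$ over $\mathcal{P}$) has measure bounded below uniformly on compact subsets of $\RP^N\setminus\mathcal{P}$, shows that almost all non-definite vertices belong to a single giant component, whence the expected number of connected components of $G$ is $o(s)$.

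The main obstacle in this plan is the final bootstrap: bounding isolated vertices does not in general bound connected components, and one must exploit the specific geometry of $\mathcal{P}\subset\RP^N$ to rule out a proliferation of small non-singleton components. A secondary subtlety is the passage from the uniform distribution (under which Theorem~\ref{thm:conn-comp-vis-graphintro} is stated) to the Kostlan distribution on $\RP^N$, which I expect to handle via the absolute continuity noted above.
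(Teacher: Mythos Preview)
Your overall architecture is exactly the paper's: reduce $b_0(\Gamma)$ to the component count of the obstacle graph via Calabi's criterion (your Dines route is equivalent), then invoke Theorem~\ref{thm:conn-comp-vis-graphintro}. However, the two ``obstacles'' you flag are not actually present, and you are missing the one-line finishing step that the paper uses.

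First, Theorem~\ref{thm:conn-comp-vis-graphintro} already bounds $\EX{b_0(\mathcal{G}(N,\mathcal{P},s))}$, i.e.\ the expected number of \emph{connected components}, not merely isolated vertices. (The prose in the introduction calling it an ``estimate on the expected number of isolated points'' is loose; look at the statement and the proof of the equivalent Theorem~\ref{thm:conn-comp-vis-graph}.) So your proposed first/second-moment bootstrap from isolated vertices to components is unnecessary.

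Second, there is no distribution mismatch to repair. Under the identification $\R[x_0,\dots,x_n]_{(2)}\cong\Sym(n+1,\R)$ with the Frobenius (= Bombieri--Weyl) inner product, the Kostlan law is the GOE, i.e.\ a standard Gaussian on that inner-product space; its projectivisation is therefore \emph{exactly} the uniform (Fubini--Study) measure on $\RP^N$. No absolute-continuity argument is needed.

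What you are missing is the clean subtraction that replaces your giant-component argument. Any vertex $[q_i]\in\mathcal{P}$ is automatically isolated in the obstacle graph, since every projective line through $[q_i]$ meets $\mathcal{P}$ at $[q_i]$ itself. Hence removing the vertices in $\mathcal{P}$ (precisely the empty quadrics) deletes only isolated points, and
\[
b_0(\Gamma)\;=\;b_0\bigl(\mathcal{G}(N,\mathcal{P}_n,s)\bigr)\;-\;\#\{i:[q_i]\in\mathcal{P}_n\}.
\]
Taking expectations, dividing by $s$, and applying Theorem~\ref{thm:conn-comp-vis-graphintro} together with $\EX{\#\{i:[q_i]\in\mathcal{P}_n\}}=s\cdot\vol{\mathcal{P}_n}/\vol{\RP^N}$ gives $\limsup_{s\to\infty}\EX{b_0(\Gamma)}/s\le 0$, hence $=0$.
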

\begin{remark}In the case $n=1$, the expected number of zeroes in $\RP^1$ of a random Kosltan quadric $P$ is $\sqrt{2}$.
Since the probability that two independent quadrics have a zero in common is zero, it follows that, when $n=1$,  $\EX{b_0(\Gamma)}=s\sqrt{2}.$
\end{remark}
\begin{remark}The topology of a random intersection of quadrics has been studied in \cite{Letwo, lerario2016gap}, also using a random spectral sequence (different from the one of this paper). There the following statement is proved: if $X\subset \RP^n$ is an intersection of $k$ random quadrics, then for every fixed $i\geq 0$ with probability that goes to one faster than any polynomial as $n\to \infty$ we have $b_i(X)=1$. In fact this phenomenon follows from a sort of ``rigidification'' of the spectral sequence structure in the large $n$ limit (a similar phenomenon can be observed in the context of this paper).
\end{remark}

As a corollary of Theorem \ref{thm:quadrics} (cf. Corollary \ref{cor:quadrics-graph-ramsey}), we rule out the existence of linear sized cliques in the complement of the quadrics graph. This must be contrasted with a result in \cite{Alon-Pach-et-al} who prove a Ramsey type result (cf. Theorem \ref{thm:alon-semialgebraic-graph}) about existence of sub-linear sized cliques in general semi-algebraic graphs.

\subsection{A random graph model}
\label{subsec:graph}
The result on random arrangements of quadrics unexpectedly follows from the statistic of the number of connected components of a certain random graph introduced as follows. We pick a semialgebraic convex subset $\mathcal{P}\subset \RP^N$ and we sample independent points $q_1, \ldots, q_s$ from the uniform distribution on $\RP^N$. (In the forthcoming connection with the previous problem, $N$ plays the role of the dimension of the space of quadratic forms and the points $q_1, \ldots, q_s$ are the quadrics.) The vertices of the random graph are points $\{v_1, \ldots, v_s\}$ (one for each sample) and we put an edge between $v_i$ and $v_j$, if and only if $i\neq j$ and the line connecting $q_i$ and $q_j$ \emph{does not} intersect $\mathcal{P}$. We call such a graph a \emph{obstacle random graph} and denote it by $\mathcal{G}(\mathcal{P}, s)$. Of course the same definition makes sense in every compact Riemannian manifold, where the notion of convexity comes from geodesics. An obstacle random graph is expected to have at least $s\cdot \frac{\mathrm{vol}(\mathcal{P})}{\mathrm{vol}(\RP^N)}$ many isolated points (this is the expected number of points falling inside $\mathcal{P}$). In Theorem \ref{thm:conn-comp-vis-graphintro} below we prove that to the leading order there are no other isolated points.

\begin{theorem}[$\mathcal{P}\subset \RP^N$ fixed, $s \to \infty$]\label{thm:conn-comp-vis-graphintro}
 The expected number of connected component of the obstacle random graph satisfies
\be \lim_{s\to \infty}\frac{\EX{ b_0(\mathcal{G}(N, \mathcal{P}, s))}}{s}\leq \frac{\vol{\mathcal{P}}}{\vol{\RP^N}}.\ee
\end{theorem}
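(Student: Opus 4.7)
The plan is to decompose $b_0(\mathcal{G}) = I + N$, where $I$ counts isolated vertices and $N$ counts connected components of size at least two, and then to show $\EX{I}/s \to \vol{\mathcal{P}}/\vol{\RP^N}$ and $\EX{N}/s \to 0$. The key trivial observation is that whenever $q_i \in \mathcal{P}$ every line through $q_i$ meets $\mathcal{P}$ at $q_i$ itself, so $v_i$ is automatically isolated, and these vertices alone already produce the leading constant. Setting $\mu(q) := \PR{\overline{qq'} \cap \mathcal{P} = \emptyset}$ for $q'$ uniform on $\RP^N$, we have $\mu(q) = 0$ on $\mathcal{P}$ and $\mu(q) > 0$ on $\mathcal{P}^c$, and conditioning on $q_1$ yields
\begin{equation}
\frac{\EX{I}}{s} = \frac{\vol{\mathcal{P}}}{\vol{\RP^N}} + \int_{\mathcal{P}^c}(1-\mu(q))^{s-1}\,\frac{dq}{\vol{\RP^N}}.
\end{equation}
Dominated convergence sends the second term to zero, so the substantive part is bounding $\EX{N}$.

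Fix $\epsilon > 0$ and partition $\mathcal{P}^c$ into a boundary layer $A_\epsilon := \{0 < d(\cdot,\mathcal{P}) < \epsilon\}$ and a bulk $C_\epsilon := \mathcal{P}^c \setminus A_\epsilon$. Every non-trivial component of $\mathcal{G}$ sits inside the induced subgraph on $V_{\mathcal{P}^c}$, and inserting each $A_\epsilon$-vertex into the induced subgraph on $V_{C_\epsilon}$ can only change the number of components by at most $+1$, so $N \leq k(V_{C_\epsilon}) + |V_{A_\epsilon}|$, where $k(\cdot)$ denotes the number of connected components of the induced subgraph. Since $\EX{|V_{A_\epsilon}|}/s = \vol{A_\epsilon}/\vol{\RP^N}\to 0$ as $\epsilon\to 0$ (the boundary of a convex semialgebraic set has measure zero), the proof reduces to showing $\EX{k(V_{C_\epsilon})}/s \to 0$ for each fixed $\epsilon$.

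For the bulk subgraph I would use a common-neighbor argument. Define
\begin{equation}
\mu_2(q,q') := \PR{\overline{qq''}\cap\mathcal{P}=\emptyset\ \text{and}\ \overline{q'q''}\cap\mathcal{P}=\emptyset}
\end{equation}
with $q''$ uniform on $\RP^N$, and suppose we can establish the geometric estimate
\begin{equation}
c_\epsilon := \inf_{q,q' \in C_\epsilon}\mu_2(q,q') > 0.
\end{equation}
Conditional on $q_i, q_j \in C_\epsilon$, the probability that none of the remaining $s-2$ samples is a common neighbor is at most $(1-c_\epsilon)^{s-2}$, so the expected number of such pairs is at most $s^2(1-c_\epsilon)^{s-2}$, decaying super-polynomially. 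With probability $1-o(1)$ every pair in $V_{C_\epsilon}$ shares a common neighbor, forcing $k(V_{C_\epsilon})\leq 1$; combined with the deterministic bound $k(V_{C_\epsilon})\leq s$ this yields $\EX{k(V_{C_\epsilon})} = O(1)$. Combining all estimates, $\limsup_s \EX{b_0(\mathcal{G})}/s \leq \vol{\mathcal{P}}/\vol{\RP^N} + \vol{A_\epsilon}/\vol{\RP^N}$, and sending $\epsilon\to 0$ gives the claim.

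The hard part is the lower bound $c_\epsilon > 0$. Writing $C_q := \{r \in \RP^N : \overline{qr} \cap \mathcal{P} \neq \emptyset\}$ for the shadow cone from $q$, one has $\mu_2(q,q') = \vol{(\RP^N\setminus C_q)\cap(\RP^N\setminus C_{q'})}/\vol{\RP^N}$, which is positive precisely when $C_q\cup C_{q'}$ is a proper subset of $\RP^N$. Convexity of $\mathcal{P}$ keeps each $C_q$ a proper closed cone for $q\in \mathcal{P}^c$, and the content of the claim is that two shadow cones from bulk points cannot jointly exhaust $\RP^N$. A crude union bound gives $\mu_2 \geq 2\mu_\epsilon - 1$ with $\mu_\epsilon := \inf_{C_\epsilon}\mu$, which suffices whenever $\mu_\epsilon > 1/2$; in the general case I expect one must replace length-two paths by longer chains built from a finite covering of $C_\epsilon$, using path-connectedness of $\mathcal{P}^c$, and then invoke continuity of the relevant visibility functions together with compactness of $C_\epsilon\times C_\epsilon$ to promote pointwise positivity into the desired uniform lower bound.
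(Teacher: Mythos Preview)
Your decomposition $b_0 = I + N$ and the treatment of $I$ via the explicit formula plus dominated convergence are correct and in fact cleaner than the paper's route (which partitions by region and controls fluctuations with Chernoff bounds). The $\epsilon$-layer decomposition of $\mathcal{P}^c$ into a boundary collar $A_\epsilon$ and a bulk $C_\epsilon$, together with the inequality $N \leq k(V_{C_\epsilon}) + |V_{A_\epsilon}|$, also matches the paper's scheme.

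The genuine gap is exactly where you place it: the claim $c_\epsilon = \inf_{q,q'\in C_\epsilon}\mu_2(q,q') > 0$. You note that the union bound only gives $\mu_2 \geq 2\mu_\epsilon - 1$, which can be negative, and you leave the general case to a sketch (``longer chains built from a finite covering \ldots\ continuity \ldots\ compactness''). This is not carried out; even the pointwise statement that two shadow cones from bulk points never jointly exhaust $\RP^N$ is not established, and the continuity of $\mu_2$ needed to pass from pointwise to uniform positivity is nontrivial when $\partial\mathcal{P}$ is not smooth.

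The paper does \emph{not} prove $c_\epsilon > 0$. Instead of a two-point common-neighbor estimate, it proves a one-point/one-ball estimate: for a suitable $r = r(\epsilon) > 0$ and every ball $B(p,r)$ with $p \in C_\epsilon$, the set $G_B = \{x \in C_\epsilon : g_x \supseteq B \cap C_\epsilon\}$ has relative volume at least some $\delta_2(\epsilon) > 0$. This is obtained by passing to an affine chart via stereographic projection, replacing $\mathcal{P}$ by a smooth convex body sandwiched between $\mathcal{P}$ and $\mathcal{P}(\epsilon)$, and bounding the good-cone volume from below by a continuous function built from tangent-hyperplane geometry; compactness of $C_\epsilon$ then yields uniformity. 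A coupon-collector argument over a finite $r$-cover of $C_\epsilon$ shows that after a bounded (in $s$) number of bulk samples every ball of the cover lies in some sampled good cone, so every later bulk vertex is adjacent to one of boundedly many hubs and $k(V_{C_\epsilon}) = O_\epsilon(1)$ with high probability. Your proposed fallback of chains through a finite covering would, once made precise, converge on essentially this argument; the direct common-neighbor route, as written, remains incomplete.
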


The connection between Theorem \ref{thm:conn-comp-vis-graphintro} and Theorem \ref{thm:quadrics} comes from an interesting result of Calabi (see Theorem \ref{thm:calabi-characterization} below): the common zero set of two quadrics in $\RP^n$ is nonempty if and only if the line joining these two quadrics (the projective pencil) \emph{does not} intersect the set $\mathcal{P}\subset \RP^N$ of positive quadrics. Since nonempty quadrics in projective space are connected, the incidence graph of the random arrangement $\Gamma=\bigcup_{j=1}^sZ(q_j)$ is the same as the obstacle random graph minus its isolated points coming from vertices $v_i$ whose corresponding quadric $q_i\in \mathcal{P}.$

\subsection{Acknowledgement}
The authors would like to thank Arthur Renaudineau for pointing out a missing hypothesis in the statement of Corollary \ref{cor:random-spectral-ub-lb} in the first arxiv version.

\section{A Random Spectral Sequence}
%%sb hides
\hide{
We direct the reader to references such as~\cite{mccleary2001user} for an in-depth treatment of spectral sequences. We will only consider semi-algebraic sets which are compact, and therefore they posses finite triangulations by \cite[Theorem 9.3.2]{BCR:98}. We shall study the simplicial cohomology (in our case, the topology is tame, so various cohomology theories coincide).
}
%%sb end hide
We will only consider semi-algebraic sets which are compact.  For a compact semi-algebraic set $S$
we denote by $\HH^*(S)$ the cohomology groups of the constant sheaf $(\mathbb{Z}_2)_S$ on $S$. 
Since $S$ is compact $\HH^*(S)$ is isomorphic to the cohomology group $\HH_c^*(S)$ (cohomology of 
$(\mathbb{Z}_2)_S$  with compact support).  Moreover,  since $S$ is semi-algebraic $\HH^*(S)$ is isomorphic to the singular cohomology groups of $S$ as well.      

%%We have a finite family of 
%%sb
%%closed 
Suppose that $\Gamma_1, \ldots, \Gamma_s$ are
compact
semi-algebraic sets  in real projective space,  $\RP^n$.
We want to 
%%consider 
study
the cohomology of the union $\Gamma = \Gamma_1 \cup \cdots \cup \Gamma_s$. The following theorem  follows from the Mayer-Vietoris exact sequence in cohomology for
closed subspaces of a topological space  (see for example,  \cite[page 148]{Iversen}).

\begin{theorem}[Mayer-Vietoris spectral sequence %%(see for e.g. \cite{B00})
]
\label{thm:mayer-vietoris}
There exists a first quadrant cohomological spectral sequence $(E_r^{p,q}, \delta_r: E_r^{p, q} \to E_r^{p+r, q - r + 1})_{r \in \Z_{\ge 0}}$,
with 
\[
E_1^{p,q} = \bigoplus_{\alpha_0 < \ldots < \alpha_{p}}   \HH^q\left( \Gamma_{\alpha_0, \ldots, \alpha_p}\right),
\]
where 
$\Gamma_{\alpha_0, \ldots, \alpha_p} = \Gamma_{\alpha_0} \cap \cdots \cap \Gamma_{\alpha_p}$.

This spectral sequence converges to the cohomology of $\Gamma$,   i.e.
\be
E_{r}^{p, q} \Rightarrow \HH^{p+q}(\Gamma),
\ee
and consequently
\be
\label{eqn:upper-bound-betti-spec}
\text{rank } \HH^i(\Gamma ) = \sum_{p+q = i} \text{rank } {E}_{\infty}^{p,q}.
\ee
Also, this spectral sequence collapses at $E_n$,   and hence
\be
E_{\infty}^{n-1, 0} \cong E_{n}^{n-1, 0}.
\ee
\end{theorem}

\begin{remark}
An alternative way to obtain the same spectral sequence in 
Theorem~\ref{thm:mayer-vietoris} but using open covers instead of closed covers is as follows. 
It follows from the
conic structure theorem at infinity of semi-algebraic sets
(see for example \cite[Proposition 5.49]{BPRbook2}),
i.e. that there exist
open semi-algebraic neighborhoods $\Gamma_1^+,\ldots,\Gamma_s^+$ of $\Gamma_1, \ldots,\Gamma_s$ respectively,
such that for any $1 \leq \alpha_0 < \cdots < \alpha_p \leq s$, 
$\Gamma^+_{\alpha_0} \cap \cdots \cap \Gamma^+_{\alpha_p}$ is homotopy equivalent to 
$\Gamma_{\alpha_0} \cap \cdots \cap \Gamma_{\alpha_p}$,
and $\Gamma^+ = \Gamma^+_1 \cup \cdots \cup \Gamma^+_s$ is homotopy equivalent to
$\Gamma = \Gamma_1 \cup \cdots \cup \Gamma_s$.  
Then the spectral sequence in Theorem~\ref{thm:mayer-vietoris} is isomorphic to the spectral sequence associated to the open cover of 
$\Gamma^+$ by the subsets $\Gamma_1^+,\ldots,\Gamma_s^+$ (see for example 
\cite[Ex.  15.7.1]{botttu1982}).  
\end{remark}
 
%%sb hides
\hide{
Let $A_1, \ldots, A_s$ be triangulations of $\Gamma_1, \ldots, \Gamma_s$, respectively. Thus we have a finite simplicial complex $A = A_1 \cup \ldots \cup A_s$. By definition, any finite intersection $A_{\alpha_0} \cap \ldots \cap A_{\alpha_p}$, denoted $A_{\alpha_0, \ldots, \alpha_p}$, is a sub-complex of $A$. Let $C^{i}(A)$ denote the vector space over $\R$ of $i$-co-chains of $A$, and $C^{*}(A) = \bigoplus_i C^i(A)$. We shall use the Mayer-Vietoris spectral sequence.

\begin{theorem}[Mayer-Vietoris spectral sequence (see for e.g. \cite{B00})]
\label{thm:mayer-vietoris}
There exists a first quadrant cohomological spectral sequence $(E_r, \delta_r)_{r \in \Z_{\ge 0}}$, where each $E_r$ is a double complex
\be
E_r = \bigoplus_{p, q \in \Z_{\ge 0}} E_r^{p, q},
\ee
and
\be
E_0^{p, q} = \bigoplus_{\alpha_0 < \ldots < \alpha_{p}} C^q(A_{\alpha_0, \ldots, \alpha_p}),
\ee
with morphisms
\be
\delta_r: E_r^{p, q} \to E_r^{p+r, q - r + 1},
\ee
where
\be
E_{r+1} \cong H_{\delta_{r}}(E_{r}).
\ee
This spectral sequence converges to the cohomology of the union, i.e.
\be
E_{r}^{p, q} \Rightarrow \HH^{p+q}(A),
\ee
and consequently
\be
\label{eqn:upper-bound-betti-spec}
\text{rank } \HH^i(A) = \sum_{p+q = i} \text{rank } {E}_{\infty}^{p,q}.
\ee
Also, this spectral sequence collapses at $E_n$, i.e.
\be
E_{\infty}^{n-1, 0} \cong E_{n}^{n-1, 0}.
\ee
\end{theorem}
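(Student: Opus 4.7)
The statement is the standard Mayer--Vietoris spectral sequence associated to the simplicial cover $\{A_1, \ldots, A_s\}$ of $A$, so the plan is to construct the relevant double complex explicitly, verify that it computes $H^*(A)$, and then read off the collapse at page $E_n$ from a dimension count.

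I begin by defining the double complex
\[ K^{p,q} = \bigoplus_{\alpha_0 < \cdots < \alpha_p} C^q(A_{\alpha_0, \ldots, \alpha_p}), \]
with vertical differential $d''$ equal to the simplicial coboundary on each summand and horizontal differential $d'$ given, \v{C}ech-style, by the alternating sum of pullbacks along the inclusions $A_{\alpha_0, \ldots, \alpha_{p+1}} \hookrightarrow A_{\alpha_0, \ldots, \widehat{\alpha_k}, \ldots, \alpha_{p+1}}$. A routine verification gives $d'^2 = d''^2 = 0$ and $d'd'' + d''d' = 0$, so $K^{\bullet, \bullet}$ is a first-quadrant double complex whose associated $E_0$ page is exactly the one claimed.

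To show that $K$ computes $H^*(A)$, I verify that the augmented rows
\[ 0 \to C^q(A) \to \bigoplus_i C^q(A_i) \to \bigoplus_{i < j} C^q(A_{ij}) \to \cdots \]
are exact. Working simplex by simplex, for each $\sigma \in A$ let $I(\sigma) = \{i : \sigma \in A_i\}$, which is nonempty. The $\sigma$-component of the augmented row becomes the reduced alternating cochain complex of the full simplex on the vertex set $I(\sigma)$, which is acyclic. Hence the $q$-filtered spectral sequence of $K$ has $E_1^{0, q} \cong C^q(A)$ and vanishes for $p \geq 1$, so it degenerates at $E_2$ on the $p = 0$ column and identifies the cohomology of the total complex with $H^*(A)$. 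Since both spectral sequences of a bounded first-quadrant double complex abut to the same limit, the $p$-filtered spectral sequence --- namely the one constructed above --- also converges to $H^{p+q}(A)$.

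For the collapse at page $n$, every $A_{\alpha_0, \ldots, \alpha_p}$ triangulates a semi-algebraic subset of $\Gamma_{\alpha_0} \cap \cdots \cap \Gamma_{\alpha_p}$, which has dimension at most $n-1$ in $\RP^n$, so $C^q(A_{\alpha_0, \ldots, \alpha_p}) = 0$ for $q \geq n$. Consequently $E_r^{p, q} = 0$ whenever $p < 0$, $q < 0$, or $q \geq n$ at every page. At bidegree $(n-1, 0)$ the outgoing differential $\delta_r$ lands at $(n-1+r, 1-r)$, which vanishes for all $r \geq 2$, while the incoming differential originates at $(n-1-r, r-1)$, which vanishes as soon as $r \geq n$. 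Therefore $E_r^{n-1, 0}$ is stable from page $n$ onward, yielding $E_\infty^{n-1, 0} \cong E_n^{n-1, 0}$. The only step that requires real care is the row-exactness of the augmented double complex, and even that reduces to a standard contractibility statement for a simplex; the remainder is sign bookkeeping, so I anticipate no serious obstacle.
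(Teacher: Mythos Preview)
Your argument is correct and is the standard construction of the Mayer--Vietoris spectral sequence. Note, however, that the paper does not supply its own proof of this theorem: it is stated as a known result with a reference to \cite{B00}, so there is no ``paper's proof'' to compare against. Your sketch---building the \v{C}ech double complex, verifying row exactness simplex by simplex, and deducing convergence from the other filtration---is exactly the textbook route. One small remark: the dimension bound you invoke (that $C^q$ vanishes for $q \geq n$ because each $\Gamma_{\alpha_0} \cap \cdots \cap \Gamma_{\alpha_p}$ has dimension at most $n-1$) is not actually needed for the specific stabilization $E_\infty^{n-1,0} \cong E_n^{n-1,0}$; the first-quadrant vanishing alone suffices, as your analysis of the incoming and outgoing differentials at $(n-1,0)$ already shows.
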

}

\begin{corollary}[of Theorem \ref{thm:mayer-vietoris}]
\label{cor:random-spectral-ub-lb}
%%Let $A_1, \ldots, A_s$ be random simplicial complexes. 
Let $\Gamma_1, \ldots, \Gamma_s$ be randomly chosen closed subspaces of
$\RP^n$. Consider the same definitions as in Theorem \ref{thm:mayer-vietoris}. For every $r \in \Z_{\ge 0}$, define $e_r^{p, q} := \EX{\text{rank } E_r^{p, q}}$. 
We have
\be
\label{eqn:e-infty-ub}
e_{r+1}^{p, q} \le e_{r}^{p, q}, 
\ee
and, when $E_r^{p + r, q - r + 1} = 0$,
\be
\label{eqn:e-lower-bound}
e_{r+1}^{p,q} \ge e_{r}^{p,q} - e_{r}^{p-r, q+r-1}.
\ee
\end{corollary}

\begin{proof}
Follows immediately from the deterministic versions of the same statements, which in turn follow from the structure of the differentials, i.e., specifically the fact that
\be
E_{r+1}^{p, q} \cong \bigslant{\Ker{\delta_r: E_r^{p, q} \to E_r^{p + r, q - r + 1}}}{\Img{\delta_r: E_r^{p - r, q + r - 1} \to E_r^{p, q}}}.
\ee
\end{proof}

\subsection{Average Betti numbers of hypersurface arrangements}

\begin{proof}[Proof of Theorem \ref{thm:random-arrangement-topology}]
We give the proof for the spherical case; the asymptotic for projective case needs to be divided by two. 
By Theorem \ref{thm:mayer-vietoris}, 
\be
%%E_1^{p, q} \cong \bigoplus_{\alpha_0 < \ldots < \alpha_{p}} H^q(A_{\alpha_0, \ldots, \alpha_p}).
E_1^{p, q} \cong \bigoplus_{\alpha_0 < \ldots < \alpha_{p}} \HH^q(\Gamma_{\alpha_0, \ldots, \alpha_p}).
\ee

%%In our case, we have random complexes $A_{\alpha_0, \ldots, \alpha_p}$, and we need two results. 
In our case, we have random closed subsets  $\Gamma_{\alpha_0, \ldots, \alpha_p}$,  and we need two results. 

First let us recall the Edelman-Kostlan-Shub-Smale Theorem \cite{edelman1995many,Kostlan, shub1993complexity}, on the expected number of common solutions in $\RP^n$ of a random system of equations: if $P_1, \ldots, P_n$ are random, independent, homogeneous polynomials of degrees $d_1, \ldots, d_n$, Kostlan distributed, then the expected number of common zeroes in $\RP^n$ of these polynomials equals $\sqrt{d_1\cdots d_n}$. 

In particular, since 
%%$A_{\alpha_0, \ldots,  \alpha_{n-1}}$
$\Gamma_{\alpha_0, \ldots,  \alpha_{n-1}}$
 is the intersection in $\RP^n$ of $n$ random, independent, Kostlan distributed hypersurfaces of degree $d_{\alpha_0}, \ldots, d_{\alpha_{n-1}}$, the previous result gives the precise value of the expected rank of 
 %%$\bigoplus_{\alpha_0 < \ldots < \alpha_{n-1}} H^0(A_{\alpha_0, \ldots, \alpha_{n-1}})$:
 $\bigoplus_{\alpha_0 < \ldots < \alpha_{n-1}} \HH^0(\Gamma_{\alpha_0, \ldots, \alpha_{n-1}})$:
\be
\label{eqn:EKSS}
e_1^{n-1,0} = 2\sum_{\substack{I \subset [s] \\ |I| = n}} \sqrt{\prod_{i \in I} d_i}.
\ee

Next, we need a result of Gayet-Welschinger~\cite{gayet2015expected}, which states that for every $0\leq i\leq m$ the expectation of the $i$--th Betti number of a random submanifold  $X\subset\RP^n$ defined by $n-m$ independen Kostlan polynomials, is bounded by $\EX{b_i( X)}\leq O(\sqrt{d^m}).$ 

This implies immediately that
\be
\label{eqn:gayet-welschinger}
e_1^{p, q} \le \binom{s}{p+1}O(d^{\nicefrac{(n-p-1)}{2}}),
\ee
for any $p < n-1$. In fact, each summand in $e_1^{p,q}$ is the expected $q$--th Betti number of a random intersection of $p+1$ random Kostlan hyeprsurfaces and,  since there are a total of $\binom{s}{p+1}$ such summands, then \eqref{eqn:gayet-welschinger} follows.

%% Antonio's addition

Denoting the reduced Betti numbers of a manifold by $\tilde{b}_*(\cdot)$, by the Alexander-Pontryiagin duality, we have
\be
b_{n-1}(\Gamma) = \tilde{b}_{n-1}(\Gamma) = \tilde{b}_0(S^n \setminus \Gamma),
\ee
thus when $n > 1$,
\be
\label{eqn:alex-pon-1}
b_{n-1}(\Gamma) = \tilde{b}_0(S^n \setminus \Gamma) = b_0(S^n \setminus \Gamma) - 1,
\ee
and when $n \le 1$,
\be
\label{eqn:alex-pon-2}
b_{n-1}(\Gamma) = \tilde{b}_0(S^n \setminus \Gamma) = b_0(S^n \setminus \Gamma).
\ee
Consequently, when $n > 1$,
\begin{align}
\EX{b_0(S^n \setminus \Gamma)} &= \EX{b_{n-1}(\Gamma)} + 1 \eqcomment{by \eqref{eqn:alex-pon-1}} \\
&= \sum_{k=1}^n e_{\infty}^{n-k, k-1} + 1 \eqcomment{by \eqref{eqn:upper-bound-betti-spec} and linearity of expectation}, \label{eqn:b0-complement-1}
\end{align}
and when $n \le 1$,
\begin{align}
\EX{b_0(S^n \setminus \Gamma)} &= \EX{b_{n-1}(\Gamma)} \eqcomment{by \eqref{eqn:alex-pon-2}} \\
&= \sum_{k=1}^n e_{\infty}^{n-k, k-1}. \eqcomment{by \eqref{eqn:upper-bound-betti-spec} and linearity of expectation}. \label{eqn:b0-complement-2}
\end{align}
First, observe that 
\begin{align}
\sum_{k = 2}^n e_{\infty}^{n-k, k-1} &\le \sum_{k = 2}^n e_{1}^{n-k, k-1} \eqcomment{by \eqref{eqn:e-infty-ub}}\\
&\le \sum_{k = 2}^n \binom{s}{n-k+1}O(d^{\nicefrac{(k-1)}{2}}) \eqcomment{by \eqref{eqn:gayet-welschinger}} \\
&\le s^{n-1}O(d^{\nicefrac{(n-1)}{2}}). \label{eqn:other-terms-ub}
\end{align}
Now it remains to give precise bounds on $e_{\infty}^{n-1, 0}$, which is the same as as obtaining precise bounds on $e_n^{n-1, 0}$, given that the spectral sequence collapses at $E_n$ (cf. Theorem \ref{thm:mayer-vietoris}). Thus,
\be
\label{eqn:bottom-term-ub}
e_{\infty}^{n-1, 0} = e_n^{n-1, 0} \le e_1^{n-1,0} = 2\sum_{\substack{I \subset [s] \\ |I| = n}} \sqrt{\prod_{i \in I} d_i},
\ee
where the second step is due to \eqref{eqn:e-infty-ub} and the final equality is by \eqref{eqn:EKSS}. This provides an upper bound on $e_{\infty}^{n-1, 0}$. To lower bound $e_{\infty}^{n-1, 0}$, we make repeated use of inequality \eqref{eqn:e-lower-bound}. Note that \eqref{eqn:e-lower-bound} is true only when $E_r^{p + r, q - r + 1} = 0$, which happens when $r > q + 1$ since we have a first-quadrant spectral sequence. Thus \eqref{eqn:e-lower-bound} holds for all $e_r^{p, 0}$ when $r > 1$. Telescoping, we get
\begin{align}
e_{\infty}^{n-1, 0} = e_n^{n-1, 0} &\ge e_{n-1}^{n-1, 0} - e_{n-1}^{0, n-2} \eqcomment{by \eqref{eqn:e-lower-bound}} \\
&\ge e_{n-1}^{n-1, 0} - e_1^{0, n-2} \eqcomment{by \eqref{eqn:e-infty-ub}} \\
&\ge e_{n-2}^{n-1, 0} - e_{n-2}^{1, n-3} - e_1^{0, n-2} \eqcomment{by \eqref{eqn:e-lower-bound}} \\
&\ge e_{n-2}^{n-1, 0} - e_{1}^{1, n-3} - e_1^{0, n-2} \eqcomment{by \eqref{eqn:e-infty-ub}} \\
&\vdots \\
&\ge e_1^{n-1, 0} - \left(\sum_{i=0}^{n - 2} e_1^{i,n-2-i}\right) \\
&\ge 2\sum_{\substack{I \subset [s] \\ |I| = n}} \sqrt{\prod_{i \in I} d_i} - \left(\sum_{i=0}^{n - 2} \binom{s}{i+1} O\left(d^{\nicefrac{(n-i-1)}{2}}\right)\right) \eqcomment{by \eqref{eqn:EKSS} and \eqref{eqn:gayet-welschinger}}. \label{eqn:bottom-term-lb}
\end{align}
Altogether,
\be
\label{eqn:bottom-term-bounds}
2\sum_{\substack{I \subset [s] \\ |I| = n}} \sqrt{\prod_{i \in I} d_i} \ge e_{\infty}^{n-1, 0} \ge 2\sum_{\substack{I \subset [s] \\ |I| = n}} \sqrt{\prod_{i \in I} d_i} - s^{n-1}O\left(d^{\nicefrac{(n-1)}{2}}\right).
\ee
From Equations \eqref{eqn:bottom-term-bounds} and \eqref{eqn:other-terms-ub}, we get that
\be
\label{eqn:e-infty-sum}
\sum_{k=1}^n e_{\infty}^{n-k, k-1} = 2\sum_{\substack{I \subset [s] \\ |I| = n}} \sqrt{\prod_{i \in I} d_i} + s^{n-1}O\left(d^{\nicefrac{(n-1)}{2}}\right).
\ee
Putting \eqref{eqn:e-infty-sum} in Equations \eqref{eqn:b0-complement-1} and \eqref{eqn:b0-complement-2} gives us that for any $n$,
\be
\EX{b_0(S^n \setminus \Gamma)} = 2\sum_{\substack{I \subset [s] \\ |I| = n}} \sqrt{\prod_{i \in I} d_i} + s^{n-1}O\left(d^{\nicefrac{(n-1)}{2}}\right),
\ee
completing the proof of the theorem.
\end{proof}
\begin{remark}[Sketch of the proof of \eqref{eq:harmonic}]\label{remark:bu}Following the same lines of the previous proof, in the case of random harmonic polynomials, we have
\be
\label{eqn:har1}
e_1^{n-1,0} = 2\sum_{\substack{I \subset [s] \\ |I| = n}}\Theta\left( d^n\right)=\Theta\left(d^ns^n\right).
\ee
In fact in this case one can use a result of B\"urgisser \cite{Bueuler} for estimating the expectation of the Euler characteristic of a random intersection 
%%$A_{\alpha_0, \ldots,  \alpha_{n-1}}$ 
$\Gamma_{\alpha_0, \ldots,  \alpha_{n-1}}$ 
in $\RP^n$ of $n$ random, independent, harmonic hypersurfaces of degree $d_{\alpha_0}, \ldots, d_{\alpha_{n-1}}$. In this case the intersection is almost surely zero dimensional, and the Euler characteristic coincides with the number of points. Using the notation of \cite{Bueuler}, the parameter $\delta$ of a random harmonic polynomial of degree $d$ is $\delta=\Theta(d)$ (see \cite[Lemma 16]{FLL} and \cite[Example 2]{FLL}) and \cite[Theorem 1.1]{Bueuler} implies \eqref{eqn:har1}. 
For the analog of \eqref{eqn:gayet-welschinger} in the harmonic case we can use Thom-Milnor's bound $b(X)\leq O(d^n)$ for the smooth zero set $X$ of an algebraic set in $\mathbb{R}\mathrm{P}^n$. This gives now:
\be \label{eq:har2}e_1^{p, q} \le \binom{s}{p+1}O(d^{n}).
\ee
Since in \eqref{eq:har2} we have $p\geq1$, the estimate \eqref{eqn:har1} dominates \eqref{eq:har2} and \eqref{eq:harmonic} follows.

Strictly speaking we observe that the bound that we have used $b(X)\leq O(d^n)$, as in Milnor's proof, would be of the order $O(d^{n})$ in the case of an affine algebraic set defined by polynomials of degree bounded by $d$, and of the order $O(d^{n+1})$ in the case of a projective algebraic set (which is the case of our interest), but Milnor's proof can be easily improved as follows. 

\begin{lemma}For every $n\geq 1$ there exists $c_n$ such that for every algebraic set $X\subset \mathbb{R}\mathrm{P}^n$ defined by polynomials of degree bounded by $d$, we have  $b(X)\leq c_n d^n$.
\end{lemma}
\begin{proof} We show it by induction on $n$. 

In the case $n=1$, if $X\neq \mathbb{R}\mathrm{P}^1$, then $X$ consists of finitely many points and, up to a change of coordinates, we can assume all this points are contained in the open set $\{x_0\neq 0\}\simeq \mathbb{R}$. Therefore $b(X)=\#X\leq d.$

Let now $n>1$. Consider in $\mathbb{R}\mathrm{P}^n$ the open sets $A_\epsilon=\{x_0^2<\epsilon(x_0^2+\cdots +x_n^2)\}$ and $B=\{x_0\neq 0\}.$ Then, by Mayer-Vietoris,
$$b(X)\leq b(A_\epsilon\cap X)+b(B\cap X)+b(A_\epsilon\cap B\cap X).$$
For $\epsilon>0$ small enough we have the following:
\begin{enumerate}
\item  set $A_\epsilon\cap X$ deformation retracts onto $X\cap \{x_0=0\}=X\cap \mathbb{R}\mathrm{P}^{n-1}$. Therefore, by the inductive hypothesis, $b(X\cap A_\epsilon)\leq c_{n-1}d^{n-1};$
\item the set $A_\epsilon\cap B\cap X$ deformation retracts onto $X\cap \{x_0^2=\epsilon(x_0^2+\cdots +x_n^2)\}.$ This is an algebraic set in $\{x_0\neq 0\}\simeq \mathbb{R}^n$ defined by polynomials of degree bounded by $d$ and, by the affine Milnor's bound, $b(A_\epsilon\cap B\cap X)\leq O(d^{n});$ 
\item the set $B\cap X$ is an algebraic set in $\{x_0\neq 0\}\simeq \mathbb{R}^n$ defined by polynomials of degree bounded by $d$ and, again by the affine Milnor's bound, $b(B\cap X)\leq O(d^n).$
\end{enumerate} 
Putting these three estimates together, we get the statement.
\end{proof}
 \end{remark}
Below we give a corollary of Theorem~\ref{thm:random-arrangement-topology} which gives a bound on the sum of the Betti numbers of $\RP^n \setminus \Gamma$ (we prove the corollary for the spherical case, again one has to divide the asymptotics by two in the projective case).
\begin{corollary}[$n$ fixed]
\label{cor:random-arrangement-topology}
Let $\Gamma$ be defined as in Theorem \ref{thm:random-arrangement-topology}. Then, for all $k > 0$,
\be 
\label{eqn:higher-betti-numbers}
\EX{b_k(S^n \setminus \Gamma)} = O(d^{\nicefrac{(n-1)}{2}}s^{n-k}).
\ee
Consequently,
\be 
\label{eqn:betti-numbers-sum}
\EX{\sum_{i=0}^{n-1} b_i(S^n \setminus \Gamma)} = 2\sum_{\substack{I \subset [s] \\ |I| = n}} \sqrt{\prod_{i \in I} d_i}+  O(d^{\nicefrac{(n-1)}{2}}s^{n-1}).
\ee
\end{corollary}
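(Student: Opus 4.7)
The plan is to mirror the proof of Theorem~\ref{thm:random-arrangement-topology}, but to apply it to every antidiagonal of the Mayer--Vietoris spectral sequence rather than just the bottom row. By Alexander--Pontryagin duality on $S^n$, for $0 < k \le n-1$ one has $b_k(S^n\setminus\Gamma)=\tilde b_{n-k-1}(\Gamma)$, which differs from $b_{n-k-1}(\Gamma)$ by at most $1$. Theorem~\ref{thm:mayer-vietoris} writes $b_{n-k-1}(\Gamma)$ as $\sum_{p+q=n-k-1}\text{rank}(E_\infty^{p,q})$, and together with \eqref{eqn:e-infty-ub} this upgrades to
\be
\EX{b_k(S^n\setminus\Gamma)} \;\le\; \sum_{p+q=n-k-1} e_1^{p,q} \;+\; 1.
\ee

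The next step is to apply the Gayet--Welschinger estimate \eqref{eqn:gayet-welschinger} to each $e_1^{p,q}$ on the antidiagonal $p+q=n-k-1$, giving $e_1^{p,q}\le \binom{s}{p+1}\,O(d^{(n-p-1)/2})$. Since $0\le p\le n-k-1$, every summand satisfies $\binom{s}{p+1}\le s^{n-k}$ and $d^{(n-p-1)/2}\le d^{(n-1)/2}$; the antidiagonal has only $n-k$ terms, a constant depending on $n$ and $k$ alone, so the whole sum is $O(s^{n-k}d^{(n-1)/2})$, proving \eqref{eqn:higher-betti-numbers}. For $k\ge n$ the left-hand side vanishes trivially.

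For \eqref{eqn:betti-numbers-sum} I would simply add the estimates for $i=0,1,\ldots,n-1$. Theorem~\ref{thm:random-arrangement-topology} supplies the $i=0$ term with the desired leading order, and by \eqref{eqn:higher-betti-numbers} the remaining summands contribute at most $\sum_{k=1}^{n-1} O(d^{(n-1)/2}s^{n-k}) = O(d^{(n-1)/2}s^{n-1})$, which is absorbed into the existing error term. I do not anticipate any genuine difficulty here: all the ingredients (Alexander duality, the Mayer--Vietoris spectral sequence, the Gayet--Welschinger bound) have already been used in the proof of Theorem~\ref{thm:random-arrangement-topology}. The only bookkeeping point is to confirm that the Gayet--Welschinger bound continues to apply with heterogeneous degrees $d_1,\ldots,d_s$ once we take $d=\max_i d_i$, exactly as in the footnote accompanying \eqref{eqn:gayet-welschinger}.
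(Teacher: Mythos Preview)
Your proposal is correct and follows essentially the same route as the paper: Alexander--Pontryagin duality to pass from $b_k(S^n\setminus\Gamma)$ to $b_{n-k-1}(\Gamma)$, then the Mayer--Vietoris spectral sequence together with $e_\infty^{p,q}\le e_1^{p,q}$ and the Gayet--Welschinger bound \eqref{eqn:gayet-welschinger} along the antidiagonal $p+q=n-k-1$, and finally summation using Theorem~\ref{thm:random-arrangement-topology} for the $i=0$ term. The only cosmetic difference is that the paper uses $\tilde b_{n-k-1}(\Gamma)\le b_{n-k-1}(\Gamma)$ directly rather than carrying the ``$+1$'', but this is immaterial inside the $O(\cdot)$.
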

\begin{proof}
By Alexander-Pontryiagin duality, when $k > 0$,
\be
b_k(S^n \setminus \Gamma) = \tilde{b}_k(S^n \setminus \Gamma) = \tilde{b}_{n-k-1}(\Gamma) \le b_{n-k-1}(\Gamma),
\ee
thus
\begin{align}
\EX{b_k(S^n \setminus \Gamma)} &\le \EX{b_{n-k-1}(\Gamma)} \\
&= \sum_{i=0}^{n-k-1} e_{\infty}^{i, n-k-i-1} \\
&\le \sum_{i=0}^{n-k-1} e_{1}^{i, n-k-i-1} \\
&\le \sum_{i=0}^{n-k-1} \binom{s}{i+1}O\left(d^{\nicefrac{(n-i-1)}{2}}\right) \eqcomment{by \eqref{eqn:gayet-welschinger}}\\
&\le s^{n-k}O\left(d^{\nicefrac{(n-1)}{2}}\right),
\end{align}
proving Equation \eqref{eqn:higher-betti-numbers}. Using this, Equation \eqref{eqn:b0-general} of Theorem \ref{thm:random-arrangement-topology}, and linearity of expectation, Equation \eqref{eqn:betti-numbers-sum} follows immediately.
\end{proof}
Thus the expected total Betti number of $\RP^n\backslash \Gamma$ has the same order as that of its number of connected components.

\section{Obstacle Random Graphs and an Application to Arrangement of Quadrics}

%Theorem~\ref{thm:random-arrangement-topology} gives no information on the top Betti number of $\RP^n\backslash \Gamma$, i.e. on the number of connected components of $\Gamma$. 

In this section, we study the top Betti number of $\RP^n \setminus \Gamma$, when $\Gamma$ is the union of a finite set of quadrics. It turns out that in this case, the problem of understanding the number of connected components of $\Gamma$ is related to the connectivity of a certain random graph model. 

In the study of the topological complexity of arrangements of hypersurfaces, there are two sets of parameters that play a part. First is the sequence of degrees of the polynomials defining the hypersurfaces. Second is the number of polynomials in the arrangement. The former is often called the `algebraic part' and the latter is called the `combinatorial part'. While the algebraic part is indeed important, in several applications, for instance in discrete and computational geometry, it is the combinatorial part of the complexity that is of paramount interest. This is because one typically encounters arrangements of a large number of objects, where each object has ``bounded complexity''. 

Theorem \ref{thm:random-arrangement-topology} and Corollary \ref{cor:random-arrangement-topology} together suggest that in arrangements of $s$ random hypersurfaces, the top Betti number of the complement of the union of the arrangement grows linearly in $s$. In line with many results where the growth of the Betti numbers of semi-algebraic sets defined by quadratic inequalities is shown to be different, in this section we prove a bound on the average top Betti number of the complement of the union of an arrangement of Kostlan quadrics that is sub-linear in $s$. In Section \ref{sec:random-graph-model}, we introduce our random graph model which we call ``Obstacle'' random graphs. In Section \ref{sec:avg-conn-comp}, we prove a theorem (Theorem~\ref{thm:conn-comp-vis-graph}) about the average number of connected components in this random graph model. Then, in Section \ref{sec:b0-quad}, using a theorem of Calabi (Theorem~\ref{thm:calabi-characterization}), we obtain a result on the average zeroth Betti number of $\Gamma$ (Theorem \ref{thm:quadrics}), when $\Gamma$ is a finite union of the zero sets of quadrics.

\subsection{The `Obstacle' random graph model}
\label{sec:random-graph-model}

In this section we introduce the obstacle random graph model. Before, we need a definition of convexity in $\RP^n.$ 
\begin{definition}[Convex set in $\RP^N$] Let $\Pos\subset \RP^N$ be a measurable set and $f:S^N\to \RP^N$ the double cover map. We will say that $\Pos$ is convex if it is entirely contained in one single affine chart \emph{and}  each one of the two components of $f^{-1}(\Pos)$ is geodesically convex in  $S^N$.
\end{definition}
Note that convex sets in $\RP^N$ are contractible and that for every pair of points $q_0, q_1\in \Pos$ there is a ``segment'' joining these two points and entirely contained in $\Pos$. This segment is build as follows: first lift $q_0, q_1$ to two points $\tilde q_0, \tilde q_1\in S^N$ both belonging to the same component $\tilde\Pos$ of $p^{-1}(\Pos)=\tilde\Pos\sqcup -\tilde\Pos$. Then consider the spherical arc
\begin{equation}
\tilde q_t:=\frac{(1-t)\tilde q_0+t\tilde q_1}{\|(1-t)\tilde q_0+t\tilde q_1\|}, \quad t\in [0, 1].
\end{equation}
This spherical arc is a (reparametrized) geodesic joining $\tilde q_0$ to $\tilde q_1$ and therefore, by assumption, it lies in $\tilde\Pos$. The arc joining $q_0$ with $q_1$ is $q_t:=p(\tilde q_t)$, $t\in [0, 1]$.

Notice that with this definition a ball $B_{\RP^N}(x, \frac{\pi}{2}-\epsilon)$, for $\epsilon>0$ small enough, is convex but not geodesically convex in $\RP^N$. 
\begin{definition}[`Obstacle' random graph] 
\label{def:obstable-random-graph}
Let $\{q_1, \ldots, q_s\} \subset \RP^N$ be a sample from the uniform distribution on $\RP^N$, and let $\Pos \subset \RP^N$ (the ``obstacle'') be a measurable convex set. We define the obstacle random graph model $\graph(N, \Pos, s)$ as follows:
\begin{enumerate}[1.]
\item $\graph(N, \Pos, s)$ has $s$ vertices $\{q_1, \ldots, q_s\}$.
\item Define $\ell(q_i, q_j) := \{[\lambda_a q_i+\lambda_b q_j]\}_{[\lambda_a, \lambda_b]\in \RP^1}$. The edge set is defined as the set of unordered pairs
\be
\left\{(q_i, q_j) \mid 1 \le i < j \le s \text{ and } \ell(q_i, q_j) \cap \Pos = \emptyset\right\}.
\ee
\end{enumerate}
In other words, it is an undirected graph where the vertices are $\{q_1, \ldots, q_s\}$, and for every pair of distinct vertices $q_i, q_j$ has an undirected edge if and only if the great circle connecting the vertices \emph{does not} intersect $\Pos$.
\end{definition}

This model bears some similarity to random visibility graphs \cite{de2000visibility}. See Figure~\ref{fig:obstacle-random-graph} for an example illustration.

\begin{illustration}
\begin{figure}
\label{fig:obstacle-random-graph}
\vspace{5pt}
\begin{tikzpicture}[scale=1.7]
% Draw Sphere
       \pgfmathsetmacro\R{2} 
	   \fill[ball color=\spherecolor,opacity=\sphereopacity] (0,0) circle (\R); % 3D lighting effect
        \node[above,color=black]  at (0,1.6) {\large $\RP^N$};

% Cone of Positive Quadratic Forms
		\fill[fill=\pncolor,opacity=\pnopacity] {[rotate=0] (48:2 and 0.9) arc (48:100:2 and 0.9)} {[rotate=320] arc (115:73.5:2 and 0.5)} {[rotate=220] arc(106.5:136.5:2 and 0.5)}-- cycle;
        \node[above,color=black]  at (1.0,0.35) {\large $\mathcal{P}$};

% Vertices of Graph
          \begin{scope}[rotate=60]
		    \fill(-0.52,0.97) circle (1pt);
          \end{scope}
          \begin{scope}[rotate=150]
		    \fill(-0.85,-1.36) circle (1pt);
          \end{scope}
          \begin{scope}[rotate=120]
		    \fill(-0.68,-1.07) circle (1pt);
		    \fill(1.29,-0.87) circle (1pt);
          \end{scope}
          \begin{scope}[rotate=90]
		    \fill(-1.41,-1.09) circle (1pt);
          \end{scope}
		  \vertex{2}{0.45}{76}{320}
		  \vertex{2}{0.45}{105}{320}

% Edges of Graph
          \begin{scope}[rotate=356.5]
            \path[draw,thick,\edgecolor] (-1.12,0) arc [start angle=236,
              end angle=310,
              x radius=2cm,
            y radius=0.65cm] ;
          \end{scope}
          \begin{scope}[rotate=60]
            \path[draw,thick,\edgecolor] (1.41,0.70) arc [start angle=45,
              end angle=105,
              x radius=2cm,
            y radius=1cm] ;
          \end{scope}
          \begin{scope}[rotate=150]
            \path[draw,thick,\edgecolor] (-0.85,-1.36) arc [start angle=245,
              end angle=290,
              x radius=2cm,
            y radius=1.5cm] ;
          \end{scope}
          \begin{scope}[rotate=62]
            \path[draw,thick,\edgecolor] (0.52,-1.16) arc [start angle=285,
              end angle=311,
              x radius=2cm,
            y radius=1.2cm] ;
          \end{scope}
          \begin{scope}[rotate=90]
            \path[draw,thick,\edgecolor] (-1.41,-1.09) arc [start angle=225,
              end angle=292,
              x radius=2cm,
            y radius=1.54cm] ;
          \end{scope}
          \begin{scope}[rotate=320]
            \path[draw,thick,\edgecolor] (-0.85,-0.68) arc [start angle=245,
              end angle=332,
              x radius=2cm,
            y radius=0.75cm] ;
          \end{scope}
		  \edgeA{76}{105}{2}{0.45}{320}

% Non-Edges of the Graph
		\nonedgeA{217}{260}{2}{1.22}{103}
		\nonedgeA{239}{322}{2}{-0.44}{18.5}
		\nonedgeA{213}{315}{2}{0.95}{111}
		\nonedgeA{250}{310}{2}{1.14}{120}
		\nonedgeA{38}{72}{2}{0.24}{81}
		\nonedgeA{39}{90}{2}{0.68}{12.5}
		\nonedgeA{54}{100}{2}{0.58}{336}
		\nonedgeA{27}{108}{2}{0.28}{303}
		\nonedgeA{83}{118}{2}{0.65}{30}
		\nonedgeA{87}{140}{2}{0.65}{282.5}
		\nonedgeA{286}{323}{2}{0.37}{36}
		\nonedgeA{51}{72}{2}{0.11}{354}
		\nonedgeA{208}{262}{2}{0.6}{118}
		\nonedgeA{70}{125}{2}{0.03}{0}
\end{tikzpicture}
\caption{Illustration of obstacle random graph. The thick lines denote edges of the graph, while the dotted lines denote non-edges, i.e. edges that were not included in the random graph because their geodesic completion intersected $\Pos$.}
\end{figure}
\end{illustration}

\begin{remark}
Two commonly studied random graph models are the Erd{\"o}s-R{\'e}nyi model (proposed in \cite{erdos1959random,gilbert1959random}), and the geometric random graph model (proposed in \cite{gilbert1961random}).

\begin{itemize}
\item In the obstacle random graph, the edge probabilities are random variables, and the random variables are not independent. Thus this model is dissimilar to the Erd{\"o}s-R{\'e}nyi model.
\item Define the metric $d: \RP^N \times \RP^N \to \R,$ where
  \be
    d(q, q') = \begin{cases} 0 & q_1 = q_2 \\ 
                                  1 & \ell(q_1, q_2) \cap \Pos \neq \emptyset \\
									\frac{1}{2} & \mathrm{otherwise}
              \end{cases},
  \ee
  where $\ell(q_1, q_2)$, as defined earlier, is the projective line containing the points $q_1$ and $q_2$. While our graph is a geometric random graph on $s$ vertices with an edge appearing between two distinct vertices $q_1, q_2$ when $d(q_1, q_2) \le \frac{1}{2}$, note that $d$ is a non-continuous function that is difficult to work with, and thus standard results in the geometric random graph literature do not apply. \end{itemize}
\end{remark}

\subsection{$b_0$ of arrangement of random quadrics}
\label{sec:b0-quad}

Our theorem on the expected $b_0$ of an arrangement of random quadrics, i.e. Theorem \ref{thm:quadrics}, follows from a general theorem that we prove about the average number of connected components in the obstacle random graph model $\graph(N, \Pos, s)$ as $s \to \infty$.

\begin{theorem}[$N$ fixed]
\label{thm:conn-comp-vis-graph}
Consider the obstacle random graph model $\graph(N, \Pos, s)$ as per Definition \ref{def:obstable-random-graph}. Then
\be
\lim_{s \to \infty} \frac{\EX{b_0(\graph(N, \Pos, s))}}{s} \le \frac{\vol{\mathcal{P}}}{\vol{\RP^N}}.	
\ee
\end{theorem}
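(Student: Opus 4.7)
The plan is to exploit the identity
\be
b_0(\graph) = \sum_{i=1}^s \frac{1}{|C(q_i)|},
\ee
where $C(q_i)$ denotes the connected component containing $q_i$ (a component of size $k$ contributes $k$ terms each equal to $1/k$, summing to $1$). Since the vertices $q_1, \ldots, q_s$ are i.i.d.\ uniform on $\RP^N$, exchangeability yields $\EX{b_0(\graph(N, \Pos, s))}/s = \EX{1/|C(q_1)|}$, so the problem reduces to proving
\be
\limsup_{s\to \infty} \EX{\frac{1}{|C(q_1)|}} \le \frac{\vol{\Pos}}{\vol{\RP^N}}.
\ee

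Next I would split this expectation according to whether $q_1 \in \Pos$ or not. If $q_1 \in \Pos$, then every line through $q_1$ passes through $q_1 \in \Pos$ and hence meets $\Pos$, so $q_1$ is automatically isolated, $|C(q_1)| = 1$, and this piece contributes exactly $\vol{\Pos}/\vol{\RP^N}$. For $q_1 = q \notin \overline{\Pos}$, I would first show that the set $U(q) := \{q' \in \RP^N : \ell(q, q') \cap \Pos = \emptyset\}$ is nonempty (by convexity of $\Pos$ and the fact that $q \notin \overline{\Pos}$) and open (by compactness of $\Pos$), hence has positive Lebesgue measure $p(q) > 0$. Conditional on $q_1 = q$, the indicators that $q_j$ is a neighbor of $q_1$ for $j = 2, \ldots, s$ are i.i.d.\ Bernoulli$(p(q))$, so $\deg(q_1)$ is $\mathrm{Bin}(s-1, p(q))$-distributed. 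Using $|C(q_1)| \ge 1 + \deg(q_1)$ together with the elementary identity $\EX{1/(1+X)} = (1-(1-p)^s)/(sp)$ for $X \sim \mathrm{Bin}(s-1, p)$, one obtains the pointwise bound
\be
\EX{\frac{1}{|C(q_1)|} \,\middle|\, q_1 = q} \le \frac{1-(1-p(q))^s}{s\, p(q)}.
\ee

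To conclude, I would apply dominated convergence on $\RP^N \setminus \Pos$. Since $\Pos$ is semialgebraic and convex, $\partial\Pos$ has measure zero, so for almost every $q \notin \Pos$ we have $p(q) > 0$, and the right-hand side tends to $0$ as $s \to \infty$; Bernoulli's inequality shows the same expression is also dominated by $1$, so $\EX{\indicator{q_1 \notin \Pos}/|C(q_1)|} \to 0$. Combined with the contribution from $q_1 \in \Pos$, this yields the desired asymptotic inequality. The main obstacle is handling vertices $q$ close to $\partial\Pos$, where $p(q)$ can be arbitrarily small and the degree distribution fails to concentrate; the exact binomial formula above is precisely what rescues us, because the numerator $1 - (1-p(q))^s$ vanishes whenever $s\, p(q)$ is small, keeping the integrand uniformly bounded without requiring any quantitative estimate on how $p(q)$ degenerates near the boundary.
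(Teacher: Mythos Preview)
Your proof is correct and considerably more direct than the paper's. The paper proceeds by a three-region decomposition of $\RP^N$ into $\Pos$, an annulus $\Pos(\eps)\setminus\Pos$, and the exterior $\RP^N\setminus\Pos(\eps)$; it then combines Chernoff--Hoeffding bounds on the vertex counts in each region with a covering/coupon-collector argument (Lemmas~\ref{lem:cover-any-ball}--\ref{lem:geometric-coupon-collector}) and a smooth approximation of $\Pos$ (Proposition~\ref{prop:smooth-approximation-exists}) to show that the induced subgraph on the exterior has only $O_\eps(1)$ components, and finally lets $\eps\to 0$. Your route via the identity $b_0=\sum_i 1/|C(q_i)|$, exchangeability, the degree lower bound $|C(q_1)|\ge 1+\deg(q_1)$, and the closed form $\EX{1/(1+\mathrm{Bin}(s-1,p))}=(1-(1-p)^s)/(sp)$ bypasses all of this machinery; dominated convergence handles the boundary layer automatically without any $\eps$-thickening. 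As a bonus, since the contribution from $\{q_1\in\Pos\}$ already equals $\vol{\Pos}/\vol{\RP^N}$ for every $s$, your argument in fact yields equality, $\lim_{s\to\infty}\EX{b_0(\graph(N,\Pos,s))}/s = \vol{\Pos}/\vol{\RP^N}$, which the paper does not state.

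One small correction: $U(q)$ need not be open, because $\Pos$ is only assumed measurable and convex, not closed (in the quadrics application $\Pos_n$ is open, and lines tangent to $\partial\Pos_n$ lie in $U(q)$ but on its boundary). What you actually need is $p(q)>0$, and this does hold: $U(q)$ contains $\{q':\ell(q,q')\cap\overline{\Pos}=\emptyset\}$, which is open since $\overline{\Pos}$ is compact, and nonempty for $q\notin\overline{\Pos}$ provided $N\ge 2$ (in an affine chart containing $\overline{\Pos}$, separate $q$ from $\overline{\Pos}$ by a hyperplane and take any line through $q$ parallel to it; if $q$ lies at infinity, take a line inside the hyperplane at infinity). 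The restriction $N\ge 2$ is genuine---for $N=1$ every projective line is all of $\RP^1$ and the graph is edgeless---and is equally implicit in the paper's proof.
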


The proof of Theorem \ref{thm:conn-comp-vis-graph} is deferred to Section \ref{sec:avg-conn-comp}. 

Once we fix a scalar product on $\R^{n+1}$, there is a natural isomorphism between the vector space $\textrm{Sym}(n+1, \R)$ of real symmetric matrices and the space $\R[x_0, \ldots, x_n]_{(2)}$, which is given by associating to a symmetric matrix $Q$ the quadratic form defined by $q(x)=\langle x, Qx\rangle.$ It turns out that the Kostlan measure is the pushforward of the GOE\footnote{Stands for Gaussian Orthogonal Ensemble (see \cite{tao2012topics} for a description).} measure under this linear isomorphism (see for e.g. \cite{lerario2016gap} for a discussion about this), i.e.:
\be \textrm{$Q$ is a GOE matrix} \iff \textrm{$q$ is a Kostlan polynomial}.\ee

Let $\RP^N=P(\textrm{Sym}(n, \R))$ be the projectivization of the space of symmetric matrices (here $N={n+2\choose 2}-1$) and consider the set $\mathcal{P}_n\subset \RP^N$ which is the projectivization of the set of positive definite matrices (equivalently of the set of \emph{positive} quadratic forms):
\be \mathcal{P}_n=\{[Q]\in \RP^N\,|\, Q>0\}.\ee
We endow $\textrm{Sym}(n+1, \R)$ with the Frobenius metric (which corresponds to the Bombieri-Weil metric under the above linear isomorphism); on the projective space $\RP^N$ we consider the quotient Riemannian metric (for this metric the quotient map $p:S^N\to \RP^N$ is a local isometry), with corresponding volume density. In this way, if $q$ is a random Kostlan quadric, we have:
\be \label{eqn:prob-quadric-positive}\PP\{\textrm{$q$ is a positive form}\}=\frac{\textrm{vol}(\mathcal{P}_n)}{\textrm{vol}(\RP^N)}.\ee

\begin{remark}
The relative volume of $\mathcal{P}_n$ in $\RP^N$ is known (see e.g. \cite{majumdar2011many}) to decay exponentially fast when $n$ increases:
\be 
\label{eqn:relative-vol-postive-definite-cone}
\lim_{n\to \infty}\frac{1}{n^2}\log\left(\frac{\textrm{vol}(\mathcal{P}_n)}{\textrm{vol}(\RP^N)}\right)=-\frac{\log3}{4}.
\ee
\end{remark}

The following result, which is due to Calabi \cite{calabi1964}, gives a geometric criterion for two quadrics intersecting in projective space.

\begin{theorem}[Calabi, 1964]
\label{thm:calabi-characterization}
For $n \geq 2$ let $q_1, q_2\in \R[x_0, \ldots, x_n]_{(2)}$ and denote by  $\Gamma_1,\Gamma_2\subset \RP^n$ their (possibly empty) zero sets. Define $\ell(q_1, q_2) \subset \RP^N$ to be the projective line $\ell(q_1, q_2) := \{[\lambda_1 q_1+\lambda_2 q_2]\}_{[\lambda_1, \lambda_2]\in \RP^1}$ (a pencil of quadrics). Then:
\begin{equation}
\Gamma_1 \cap \Gamma_2 \neq \emptyset \iff \ell \cap \mathcal{P}_n = \emptyset.
\end{equation}
\end{theorem}

One can refer to \cite{lerario2012convex} for a proof of this using spectral sequences. As a consequence of Calabi's Theorem (Theorem~\ref{thm:calabi-characterization}), studying the average zeroth Betti number of $\Gamma$ reduces to studying the average number of connected components in the ostacle random graph model, i.e. studying the average number of connected components of $\graph(N, \Pos_n, s)$. 

Specifically, nonempty quadrics in projective space are connected, therefore the number of connected components of $\Gamma$ equals the number of connected components of the incidence graph of the zero sets $Z(q_i)$ of the sampled quadrics. This incidence graph is a subgraph of the corresponding obstacle graph -- to form the subgraph, we must discard the points that fall inside $\Pos_n$ because the zero sets of quadrics in $\Pos_n$ is empty. 

\begin{definition}
\label{defn:quadrics-graph}
We define the `quadrics graph' as the graph form by taking an instance of $\graph(N, \Pos_n, s)$ and forming a subgraph of it by discarding the vertices that fall inside $\Pos_n$. We shall use $\mathcal{H}(N, \Pos_n, s)$ to denote this random subgraph model.
\end{definition}

Relying on Calabi's Theorem, and using Theorem \ref{thm:conn-comp-vis-graph}, we shall now prove Theorem \ref{thm:quadrics}.

\begin{proof}[Proof of Theorem \ref{thm:quadrics}]
\begin{align}
\lim_{s \to \infty} \frac{\EX{b_0(\Gamma)}}{s} &= \lim_{s \to \infty}\frac{\EX{b_0(\mathcal{H}(N, \Pos_n, s))}}{s} \\
&= \lim_{s \to \infty}\frac{\EX{b_0(\graph(N, \Pos_n, s)) - \sum_{i=1}^s \indicator{q_i \in \Pos_n}}}{s} \\
&= \lim_{s \to \infty} \frac{\EX{b_0(\graph(N, \Pos_n, s))}}{s} - \frac{s\cdot \PR{q \in \Pos_n}}{s} \\
&\le \frac{\vol{\Pos_n}}{\vol{\RP^N}} - \frac{\vol{\Pos_n}}{\vol{\RP^N}} \eqcomment{by Theorem \ref{thm:conn-comp-vis-graph} and \eqref{eqn:prob-quadric-positive}} \\
&= 0 \label{eqn:exp-b0-quadrics}
\end{align}
Equation \eqref{eqn:exp-b0-quadrics} together with the fact that $\lim_{s \to \infty} \frac{\EX{b_0(\Gamma)}}{s}$ is obviously non-negative completes the proof.
\end{proof}

\subsection{Average number of connected components of obstacle random graphs}
\label{sec:avg-conn-comp}

In this section, we shall prove Theorem \ref{thm:conn-comp-vis-graph}. Below is a synopsis of the proof.

\begin{enumerate}
\item An important definition is the definition of the \emph{good cone} of a point. The \emph{good cone} of a point $p$ w.r.t. $\Pos$ is defined as the set of all points in $\RP^N \setminus \Pos$, which if sampled, would be connected by an edge to $p$ (see Definition \ref{defn:good-cone}). Thus, after sampling a point $p$, if we were to sample another point $q$ which happens to be in the good cone of $p$, then $p$ and $q$ would be connected by an edge.
\item The general strategy is to prove that, w.h.p, after a constant number of samples (say $\kappa$), the entire sphere is ``covered''. Here covered is defined as the state where the union of the good cones of the already sampled points contains $\RP^N \setminus \Pos$. By definition, this means that if we were to sample any additional points, they will be connected to at least one of the already sampled $\kappa$ vertices by an edge. Consequently, additional points cannot create a new connected component, which in turn means that w.h.p., there are at most $\kappa$ connected components.
\item To prepare the ground for showing the above, a simple first step is Proposition \ref{prop:good-event-probability}, where we understand the distribution of the number of vertices in various regions in $\RP^N$. Specifically, Proposition \ref{prop:good-event-probability} gives tail bounds on the number of vertices in $\Pos$, $\Pos(\eps) \setminus \Pos$ (where $\Pos(\eps)$ is the $\eps$-neighbourhood of $\Pos$) and $\RP^N \setminus \Pos(\eps)$.
\item Lemma \ref{lem:covering-point} proves that the subgraph of $\graph(N, \Pos, s)$ restricted to $\RP^N \setminus \Pos(\eps)$ has number of connected components constant w.r.t. $s$. The proof of Lemma \ref{lem:covering-point} involves the following sub-steps.
\begin{enumerate}
\item Cover $\RP^N \setminus \Pos(\eps)$ with balls of radius $r > 0$, $r$ to be chosen later.
\item  Then for each $r$-ball $B$, we proceed to lower bound the probability (Lemma \ref{lem:cover-any-ball}) of choosing a point in $\RP^N \setminus \Pos(\eps)$ such that the good cone of the point contains $B$. This involves showing that the volume of the good cone of a point is a continuous function of the position of the point, thus it attains a minimum. We prove this by first considering a smooth approximation of $\Pos$ containing $\Pos$ and contained in $\Pos(\eps)$ (Proposition \ref{prop:smooth-approximation-exists}), and then applying a stereographic projection and proving continuity in Euclidean space (Lemma \ref{lem:positive-measure-euclidean}).
\item Now, we have a finite number of balls covering $\RP^N \setminus \Pos(\eps)$, and there is a lower bound on the probability of each of these balls being covered. To complete our proof, we just need to show that after a finite number of samples, all balls will be covered. The final step, which is similar to a coupon-collector type argument (Lemma \ref{lem:geometric-coupon-collector}), gives tail bounds on the number of points required for all $r$-balls to be contained in good cones. This ensures that any new point sampled in $\RP^N \setminus \Pos(\eps)$ will not add a new connected component to the graph.
\end{enumerate}
\end{enumerate}

Sample $s$ points $q_1, \ldots, q_s$ i.i.d. from the uniform distribution on $\RP^N$. Let $\Pos(\eps)$ be the $\eps$-neighbourhood of $\Pos$ in $\RP^N$. Define the random variables
\be
s_e(\eps) = \sum_{i=1}^s \indicator{q_i \in \RP^N\setminus \Pos(\eps)},
\ee 
which is the number of points in $\RP^N\setminus \Pos(\eps)$, 
\be
s_a(\eps) = \sum_{i=1}^s \indicator{q_i \in \Pos(\eps) \setminus \Pos},
\ee 
which is the number of points in $\Pos(\eps) \setminus \Pos$, and 
\be
s_p = \sum_{i=1}^s \indicator{q_i \in \Pos},
\ee 
which is the number of points in $\Pos$. Obviously, \be
s = s_e(\eps) + s_a(\eps) + s_p.
\ee
Fix any $\beta < \nicefrac{1}{2}$. Now, let $\Omega_1(\eps), \Omega_2(\eps), \Omega_3$ be the following defined events:
\begin{align}
\Omega_1(\eps) &= \left\{s_e(\eps) = s\cdot\left(1 - \frac{\vol{\Pos(\eps)}}{\vol{\RP^N}}\right) \pm s^{\nicefrac{1}{2} + \beta}\cdot\left(1 - \frac{\vol{\Pos(\eps)}}{\vol{\RP^N}}\right)\right\}, \\
\Omega_2(\eps) &= \left\{s_a(\eps) = s\cdot\left(\frac{\vol{\Pos(\eps)\setminus \Pos}}{\vol{\RP^N}}\right) \pm s^{\nicefrac{1}{2} + \beta}\cdot\left(\frac{\vol{\Pos(\eps)\setminus \Pos}}{\vol{\RP^N}}\right)\right\}, \\
\Omega_3 &= \left\{s_p = s\cdot\left(\frac{\vol{\Pos}}{\vol{\RP^N}}\right) \pm s^{\nicefrac{1}{2} + \beta}\cdot\left(\frac{\vol{\Pos}}{\vol{\RP^N}}\right)\right\}.
\end{align}

Below we have a simple proposition that gives tail bounds on the random variables $s_e(\eps)$, $s_a(\eps)$ and $s_p$.

\begin{proposition}
\label{prop:good-event-probability}
Let $\beta$ be as chosen before. For all $0 < \delta < 1$, $\eps > 0$, there exists $\tilde{s}_1 = \tilde{s}_1(\delta, \beta, \eps, N)$, such that if $s > \tilde{s}_1$,
\be
\PR{\Omega_1(\eps)^c },  \PR{\Omega_2(\eps)^c }, \PR{\Omega_3^c }< \frac{\delta}{3},
\ee
and consequently, for all $\eps > 0$,
\be
\PR{\Omega_1(\eps) \cap \Omega_2(\eps) \cap \Omega_3} > 1 - \delta.
\ee
This also implies that for all $\eps > 0$,
\be
\lim_{s \to \infty} \PR{\Omega_1(\eps) \cap \Omega_2(\eps) \cap \Omega_3} = 1.
\ee
\end{proposition}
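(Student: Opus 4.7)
The plan is straightforward: each of $s_e(\eps)$, $s_a(\eps)$, and $s_p$ is a sum of i.i.d.\ indicator variables and is therefore binomially distributed, so a classical concentration inequality for binomial sums immediately yields the required tail bounds.

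More precisely, since $q_1,\ldots,q_s$ are sampled i.i.d.\ from the uniform distribution on $\RP^N$, for any measurable set $U\subset \RP^N$ the indicators $\indicator{q_i\in U}$ are i.i.d.\ Bernoulli random variables with parameter $\vol{U}/\vol{\RP^N}$. Applying this in turn to $U=\RP^N\setminus\Pos(\eps)$, to $U=\Pos(\eps)\setminus\Pos$, and to $U=\Pos$ shows that $s_e(\eps)$, $s_a(\eps)$, $s_p$ are binomially distributed with expectations equal to the centering terms in the definitions of $\Omega_1(\eps)$, $\Omega_2(\eps)$, $\Omega_3$, respectively.

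The next step is Hoeffding's inequality: for a sum $X$ of $s$ independent $\{0,1\}$-valued random variables and any $t>0$,
\be
\PR{|X-\EX{X}|>t}\leq 2\exp\!\left(-\frac{2t^2}{s}\right).
\ee
I read the symbol ``$o(\sqrt{s})$'' appearing in the definition of each $\Omega_i$ as a specific deviation $t=s^{1/2+\alpha}$ slightly above the CLT scale, with the parameter $\alpha>0$ from $\tilde{s}_1(\delta,\alpha)$ controlling the exponent. Substituting gives $\PR{|X-\EX{X}|>s^{1/2+\alpha}}\leq 2\exp(-2s^{2\alpha})$, and solving $2\exp(-2s^{2\alpha})\leq \delta/3$ for $s$ produces a threshold of exactly the form $\tilde{s}_1(\delta,\alpha)$ announced in the proposition. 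Applying this argument separately to each of the three binomial counts yields the three individual bounds $\PR{\Omega_1(\eps)^c},\PR{\Omega_2(\eps)^c},\PR{\Omega_3^c}<\delta/3$.

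A union bound then gives $\PR{\Omega_1(\eps)\cap\Omega_2(\eps)\cap\Omega_3}>1-\delta$, and since $\delta>0$ was arbitrary and the threshold $\tilde{s}_1$ is independent of $\eps$, letting $\delta\to 0^{+}$ yields the limit statement. I do not anticipate a real obstacle here: the proposition is a routine consequence of concentration for binomial sums, with at most a little bookkeeping required to match the precise algebraic form of $\tilde{s}_1$ to the bound output by Hoeffding (a Chernoff bound or a Bernstein-type estimate would serve equally well).
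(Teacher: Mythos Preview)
Your approach is exactly the paper's: observe that $s_e(\eps)$, $s_a(\eps)$, $s_p$ are binomial, apply the additive Chernoff--Hoeffding bound to each, then take a union bound. One point worth flagging: the paper's proof (in its appendix) takes the deviation to be $t=s^{1/2-\alpha}$, which is genuinely $o(\sqrt{s})$ as written in the definition of the $\Omega_i$, whereas you take $t=s^{1/2+\alpha}$. Your choice is the one that actually makes Hoeffding effective, since $2\exp(-2t^2/s)=2\exp(-2s^{-2\alpha})\to 2$ for $t=s^{1/2-\alpha}$; so the paper's literal $o(\sqrt{s})$ and its explicit $\tilde{s}_1$ carry a sign/exponent glitch that your reading quietly repairs (and the downstream application only needs $t=o(s)$ anyway).
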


See Appendix \ref{sec:proof-of-tail-bounds} for the proof of Proposition \ref{prop:good-event-probability}.

Recall that $\graph(N, \Pos, s)$ is the graph over all the $s$ points $q_1, \ldots, q_s$. Let $\graph_1(N, \Pos, s, \eps)$ denote the subgraph of $\graph(N, \Pos, s)$ restricted to the vertices in $\RP^N \setminus \Pos(\eps)$, let $\graph_2(N, \Pos, s, \eps)$ denote the subgraph of $\graph(N, \Pos, s)$ restricted to the vertices in $\Pos(\eps) \setminus \Pos$, and let $\graph_3(N, \Pos, s)$ denote the subgraph of $\graph(N, \Pos, s)$ restricted to the vertices in $\Pos$. Note that $\graph_3(N, \Pos, s)$ contains $s_p$ vertices and no edges whatsoever. The following lemma, whose proof is postponed, gives us some information of the distribution of the zeroth Betti number of $\graph_1(N, \Pos, s, \eps)$.

\begin{lemma}
\label{lem:covering-point}
For all $\eps > 0$, $\delta_1 > 0$, there exists $\tilde{s}_2 = \tilde{s}_2(\eps, \delta_1, N)$, $a = a(\eps, N)$, such that for all $s > \tilde{s}_2$
\be
\PR{b_0(\graph_1(N, \Pos, s, \eps)) \le \frac{\tilde{s}_2}{a} \condition \Omega_1(\eps)} \ge 1 - \delta_1.
\ee
\end{lemma}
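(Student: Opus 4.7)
The plan follows the outline sketched immediately after the statement. Fix $\eps > 0$. By compactness, cover $\RP^N \setminus \Pos(\eps)$ by finitely many open geodesic balls $B_1, \ldots, B_M$ of some radius $r > 0$, where $r = r(\eps)$ is chosen small enough that each $B_i$ lies inside $\RP^N \setminus \Pos(\eps/2)$; the number $M = M(\eps, N)$ depends only on $\eps$ and $N$. Every vertex of $\graph_1(N, \Pos, s, \eps)$ falls into at least one of these balls, so in order to bound $b_0(\graph_1)$ it suffices to connect each $B_i$ to a common ``anchor'' vertex.

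For a point $p \in \RP^N \setminus \Pos$, introduce its \emph{good cone}
\[
C(p) = \{q \in \RP^N \setminus \Pos : \ell(p, q) \cap \Pos = \emptyset\},
\]
which by convexity of $\Pos$ is an open subset of $\RP^N \setminus \Pos$. For each $i$, define the $B_i$-capturing set
\[
S_i = \{p \in \RP^N \setminus \Pos(\eps) : B_i \subset C(p)\}.
\]
If any sampled vertex lies in $S_i$, then by definition it is connected by an edge to \emph{every} vertex of $\graph_1$ falling in $B_i$. Consequently, once each $S_i$ contains at least one sample we automatically have $b_0(\graph_1(N, \Pos, s, \eps)) \le M$.

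The crux --- and the place where the regularity hypotheses on $\Pos$ really bite --- is to establish a uniform lower bound
\[
\min_{1 \le i \le M} \vol{S_i} \ge \eta
\]
for some $\eta = \eta(\eps, N) > 0$. The strategy is to first invoke Proposition \ref{prop:smooth-approximation-exists} to sandwich $\Pos$ between smooth convex bodies lying in a tube of width $\eps/4$, which suppresses any pathological behavior of $\partial \Pos$. Passing to an affine chart via a stereographic projection centered away from this tube, Lemma \ref{lem:positive-measure-euclidean} yields that $p \mapsto \vol{C(p) \cap B_i}$ is continuous on $\RP^N \setminus \Pos$ and strictly positive on the compact set $\RP^N \setminus \Pos(\eps)$; a standard compactness argument then produces the uniform lower bound $\eta$.

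Conditionally on $\Omega_1(\eps)$, the number of independent samples drawn uniformly from $\RP^N \setminus \Pos(\eps)$ is $s_e(\eps) = \Theta(s)$, and each has probability at least $\eta/\vol{\RP^N}$ of landing in any given $S_i$. A coupon-collector estimate (Lemma \ref{lem:geometric-coupon-collector}), applied with $M$ coupons each of probability bounded below by $\eta/\vol{\RP^N}$, then produces a threshold $\tilde{s}_2 = \tilde{s}_2(\eps, \delta_1, N)$ such that for $s > \tilde{s}_2$ every $S_i$ is hit with conditional probability at least $1 - \delta_1$. On that event $b_0(\graph_1) \le M$, and the lemma follows with any $a = a(\eps, N)$ for which $\tilde{s}_2/a \ge M$. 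The principal obstacle is the uniform positivity of $\vol{S_i}$, which is precisely why we must smooth $\Pos$ and argue for continuity of the good-cone volume in a Euclidean chart rather than dealing directly with an arbitrary semialgebraic convex set in $\RP^N$.
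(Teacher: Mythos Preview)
Your proposal is correct and follows essentially the same route as the paper: cover $\RP^N\setminus\Pos(\eps)$ by finitely many balls, invoke the smooth approximation (Proposition~\ref{prop:smooth-approximation-exists}) and the Euclidean-chart argument (Lemma~\ref{lem:positive-measure-euclidean}) to obtain a uniform positive lower bound on the volume of the capturing sets (the paper packages this as Lemma~\ref{lem:cover-any-ball}), and then run the geometric coupon-collector bound (Lemma~\ref{lem:geometric-coupon-collector}) to guarantee every ball is anchored.

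The only substantive deviation is in the final bookkeeping. You conclude $b_0(\graph_1)\le M$, the number of covering balls, whereas the paper bounds $b_0(\graph_1)\le \alpha$, the coupon-collector threshold on the number of samples needed to achieve the covering; it then sets $a=k\cdot\frac{\vol{\RP^N}}{\vol{\RP^N}-\vol{\Pos(\eps)}}$ so that $\tilde s_2/a=\alpha$ exactly. Your bound $M$ is actually sharper (and depends only on $\eps,N$), so arranging $\tilde s_2/a\ge M$ with $a=a(\eps,N)$ is trivially possible by enlarging $\tilde s_2$ if necessary. One small inaccuracy: Lemma~\ref{lem:positive-measure-euclidean} does not literally assert continuity of $p\mapsto \vol{C(p)\cap B_i}$; rather it directly produces the uniform lower bound on $\vol{G_{B(p,r')}}$ (your $\vol{S_i}$), which is precisely the $\eta$ you need.
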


\begin{definition}
\label{defn:good-cone}
For any point $q \in \RP^N$, and $\mathcal{F} \subseteq \RP^N$, define the \emph{good cone of $q$ w.r.t. $\mathcal{F}$} as
\be
g_q(\mathcal{F}) = \left\{x \in \RP^N \mid \ell(q, x) \cap \mathcal{F} = \emptyset\right\}.
\ee
If $\mathcal{F}$ is clear from the context, we will refer to $g_q(\mathcal{F})$ as just the \emph{good cone} of $q$.
\end{definition}

Thus $g_q(\Pos)$ is a random variable that denotes the set of points in $\RP^N$ which, if sampled, would be connected to $q$ by an edge in $\graph(N, \Pos, s)$. See Figure~\ref{fig:good-cone} for an example illustration of the good cone. Next, for $B \subseteq \RP^N$, $\eps > 0$, define
\be
G_B(\mathcal{F}) = \{x \in \RP^N \setminus \Pos(\eps) \mid g_x(\mathcal{F}) \supseteq B \cap \left(\RP^N \setminus \Pos(\eps)\right)\}.
\ee
In words, $G_B(\mathcal{F})$ denotes all points $x \in \RP^N \setminus \Pos(\eps)$ such that the good cone of $x$ w.r.t. $\mathcal{F}$ completely contains whatever part of $B$ is outside $\Pos(\eps)$. The following lemma, whose proof is also postponed, gives a lower bound on the relative volume of $g_q(\Pos)$, when $q$ is outside $\Pos(\eps)$.

\begin{illustration}
\begin{figure}
\vspace{5pt}
\begin{tikzpicture}[scale=1.7]
%% Draw Sphere
       \pgfmathsetmacro\R{2} 
	   \fill[ball color=\spherecolor,opacity=\sphereopacity] (0,0) circle (\R); % 3D lighting effect

%% Cone of Positive Quadratic Forms
		\fill[fill=\pncolor,opacity=\pnopacity] {[rotate=0] (48:2 and 0.9) arc (48:100:2 and 0.9)} {[rotate=320] arc (115:73.5:2 and 0.5)} {[rotate=220] arc(106.5:136.5:2 and 0.5)}-- cycle;
        \node[above,color=black]  at (1.0,0.35) {\large $\Pos$};

%% tangents
		\geodesicA{0}{180}{2}{0.3}{95}{draw,dashed,thick,\nonedgecolor}
		\geodesicA{180}{360}{2}{0.8}{49}{draw,dashed,thick,\nonedgecolor}

%% shading good cone
		\fill[fill=\goodconecolor,opacity=\goodconeopacity] {[rotate=95] (180:2) arc (180:126.5:2 and 0.3)} {[rotate=49] arc (240:360:2 and 0.8)} {[rotate=0] arc (49:-85:2)}-- cycle;
		\fill[fill=\goodconecolor,opacity=\goodconeopacity] {[rotate=49] (180:2) arc (180:240:2 and 0.8)} {[rotate=275] arc (306.5:180:2 and 0.3)} {[rotate=0] arc (95:229:2)};
        \node[above,color=black]  at (1.2,-1) {\large $g_q(\Pos)$};
        \node[above,color=black]  at (-1,-0.5) {\large $g_q(\Pos)$};

%% Label sphere
        \node[above,color=black]  at (0,1.6) {\large $\RP^N$};

%% q
		\vertex{0.8}{1.22}{260}{0}
        \node[above,color=black]  at (-0.2,-1.6) {\large $q$};
\end{tikzpicture}
\caption{Illustration of $g_q(\Pos)$, the \emph{good cone} of a point $q$ w.r.t. $\Pos$. The dashed lines are geodesics which are tangent to $\Pos$ and incident on $q$. The shaded region is $g_q(\Pos)$. Recall that in $\graph(N, \Pos, s)$, by definition, if $q$ is sampled and any point in $g_q(\Pos)$ is sampled, these points would be connected to each other by any edge.}
\label{fig:good-cone}
\end{figure}
\end{illustration}

\begin{lemma}
\label{lem:cover-any-ball}

For $p \in \RP^n$, and $r \ge 0$, define $B(p, r)$ to be a ball centered at $p$ of radius $r$. For all $\eps > 0$, there exists $r = r(\eps, N) > 0$, $\delta_2 = \delta_2(\eps, N)$, such that for any $p \in \RP^N \setminus \Pos(\eps)$,
\be
\frac{\vol{G_{B(p, r)}(\Pos)}}{\vol{\RP^N}} \ge \delta_2.
\ee
\end{lemma}

\begin{comment}
Define \be
\alpha^{r}: \RP^N \setminus \mathrm{int}(\Pos(\eps)) \to [0, \infty), \qquad \text{which takes } p \mapsto \frac{\vol{G_{B(p, r)}(\Pos)}}{\vol{\RP^N}}.
\ee
where $r \le \nicefrac{\eps}{8}$ is going to be chosen later. Note that since we are going to be choosing $r \le \nicefrac{\eps}{8}$,
\be 
B(p, r) \subseteq \RP^N \setminus \Pos(\nicefrac{\eps}{2}), \qquad \forall p \in \RP^N \setminus \Pos(\eps).
\ee
\end{comment}

Later, we will choose an $r \le \nicefrac{\eps}{8}$, so that
\be 
B(p, r) \subseteq \RP^N \setminus \Pos(\nicefrac{\eps}{2}), \qquad \forall p \in \RP^N \setminus \Pos(\eps).
\ee

\begin{remark}\label{remark:convex}Observe that the convex set $\Pos \subset \RP^N$ is contained in one single affine chart, and therefore if we denote by $f:S^N\to \RP^N$ the double cover map, the preimage $f^{-1}(\mathcal{P})$ consists of two isometric components, which for simplicity we still denote by $\Pos$ and $-\Pos$. Both these components are entirely contained in a open hemisphere, and we assume that $\Pos$  is contained in \be U=\textrm{int}B\left(e_0, \frac{\pi}{2}\right)\subset S^N\ee
 for some point $e_0\in S^N$. Let us denote now by
 \be \sigma:U\to \R^N\ee
  the stereographic projection constructed as follows: we identify $\R^N$ with $T_{e_0}S^N$ and for every point $y\in U$ we take $\sigma(y)$ to be the point of intersection between $T_{e_0}S^N$ and the line from the origin to $y$.
This stereographic projection has an interesting property that we will use: it maps (unparametrized) geodesics entirely contained in $U$, i.e. intersections between $U$ and great circles, to (unparametrized) geodesics in $\R^N$, i.e. straight segments. In particular $\sigma$ maps convex sets to convex sets, and the same is true for its inverse. In particular we can use results from convex geometry in $\R^N$ to obtain results for the convex geometry of $U$. Since $f|_{U}:U\to \RP^n$ is a local isometry onto its image, the same is true for the geometry of convex sets in $\RP^N$. 
\end{remark}

The next proposition is an application of the idea explained in Remark \ref{remark:convex}.

\begin{proposition}
\label{prop:smooth-approximation-exists}
For all $\eps > 0$, there exists a smooth convex set $\tilde{\Pos}(\eps)$ such that $\Pos \subseteq \tilde{\Pos}(\eps) \subseteq \Pos(\eps)$. 
\end{proposition}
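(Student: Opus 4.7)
The plan is to exploit Remark~\ref{remark:convex} to reduce the statement to a standard construction in Euclidean space, and then to smooth the distance function to $\Pos$. First I would lift $\Pos$ to the open hemisphere $U \subset S^N$ and push it forward under the stereographic projection $\sigma: U \to \R^N$ to obtain a compact convex set $K := \sigma(\Pos) \subset \R^N$; since $\sigma$, $\sigma^{-1}$, and $f|_U$ all send convex sets to convex sets (the first two by Remark~\ref{remark:convex}, the last because it is a local isometry onto its image), it suffices to produce a smooth convex set $\tilde K \subset \R^N$ sandwiched between $K$ and a Euclidean $\eta$-neighborhood $K^{\eta}$ for some $\eta > 0$ small enough that $f(\sigma^{-1}(K^{\eta})) \subseteq \Pos(\eps)$. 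Compactness of $\Pos$ inside $U$ and continuity of $f \circ \sigma^{-1}$ guarantee that such an $\eta$ exists.

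Next I would smooth the Euclidean distance function. Set $h(x) := \dist(x, K)$; then $h$ is convex, $1$-Lipschitz, and vanishes exactly on $K$. Pick a standard nonnegative smooth mollifier $\rho_\delta$ supported in $B(0, \delta)$ with $\int \rho_\delta = 1$, and put $u := h * \rho_\delta$. Convolution of a convex function with a nonnegative kernel preserves convexity, so $u$ is smooth and convex, and the Lipschitz bound gives $|u - h| \leq \delta$ pointwise. Choosing $\delta$ with $3\delta < \eta$, I would define
\be
\tilde K := \{x \in \R^N : u(x) \le 2\delta\},
\ee
which is convex because $u$ is, and which satisfies $K \subseteq \{u \le \delta\} \subseteq \tilde K \subseteq \{h \le 3\delta\} \subseteq K^{\eta}$ by the pointwise bound $|u - h| \le \delta$.

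The only nonroutine point, which I expect to be the main (mild) obstacle, is the smoothness of $\partial \tilde K$. Here I would argue that for any $x_0 \in \partial \tilde K$ one has $u(x_0) = 2\delta > 0 = \min_{\R^N} u$, so $x_0$ is not a global minimum of $u$; since $u$ is smooth and convex, a vanishing gradient forces a global minimum, hence $\nabla u(x_0) \neq 0$, and the implicit function theorem yields a smooth hypersurface boundary. Setting $\tilde{\Pos}(\eps) := f(\sigma^{-1}(\tilde K))$ then produces a smooth convex subset of $\RP^N$ with $\Pos \subseteq \tilde{\Pos}(\eps) \subseteq \Pos(\eps)$, completing the argument.
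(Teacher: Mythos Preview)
Your argument is correct, but it follows a different path from the paper's. The paper does not construct the approximant: it simply invokes the density of smooth convex bodies in the Hausdorff topology on convex bodies (citing \cite[Theorem 2.7.1]{schneider2014convex}) to pick a smooth convex $C_\eps$ with $d_H(C_\eps,\Pos(\eps/2))\le \eps/3$, and then argues by contradiction that $C_\eps$ must be sandwiched between $\Pos$ and $\Pos(\eps)$. Your route is instead fully constructive: after the same stereographic reduction to $\R^N$, you mollify the convex distance function $h=\dist(\cdot,K)$, take a sublevel set of the smoothed function, and use the implicit function theorem (via the observation that a smooth convex function has zero gradient only at a global minimum) to get smoothness of the boundary. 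This buys you a self-contained proof that does not rely on the black-box density theorem and makes the regular-value argument for $\partial\tilde K$ explicit; the paper's version is shorter but outsources exactly that work to Schneider. One small point worth making explicit in your write-up: the claim $\min_{\R^N}u=0$ (or at least $\min u<2\delta$) uses that $K$ has nonempty interior, which holds here since $\Pos$ is full-dimensional, but you should say so.
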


\begin{proof}
Consider the $\nicefrac{\eps}{2}$-neighbourhood of $\Pos$, i.e. $\Pos(\nicefrac{\eps}{2})$. Since the set of smooth convex bodies is dense in the Hausdorff distance induced topology on the space of convex bodies (see \cite[Theorem 2.7.1.]{schneider2014convex}), there exists a body $C_{\eps}$ that is convex, smooth and also satisfies \be
\label{eqn:dist-approximation-from-center}
d_H(C_{\eps}, \Pos(\nicefrac{\eps}{2})) \le \nicefrac{\eps}{3},
\ee
where $d_H$ denotes Hausdorff distance with the underlying metric being the usual round metric on $S^N$. We shall now show that $C_{\eps}$ itself is the smooth approximation we desire, i.e. $\tilde{\Pos}(\eps)$. We know that $d_H(\Pos, \Pos(\nicefrac{\eps}{2})) = \nicefrac{\eps}{2}$. Observe that if $\Pos$ was not completely contained in $C_{\eps}$, then $d_H(\Pos(\nicefrac{\eps}{2}), C_{\eps}) \ge \nicefrac{\eps}{2}$, which contradicts Equation \eqref{eqn:dist-approximation-from-center}. Similarly, it can be shown that $C_{\eps}$ is completely contained in $\Pos(\eps)$ because otherwise, we would again have $d_H(\Pos(\nicefrac{\eps}{2}), C_{\eps}) \ge \nicefrac{\eps}{2}$ (because $d_H(\Pos(\nicefrac{\eps}{2}), \Pos(\eps)) = \nicefrac{\eps}{2}$) contradicting Equation \eqref{eqn:dist-approximation-from-center}.
\end{proof}

The following lemma proves that for every $r'$-ball (where $r' > 0$ is appropriately chosen) contained in $\RP^N \setminus \Pos(\eps)$, there is a set of positive measure such that the good cone of any point in this set contains the ball, which in turn implies that with each vertex sampled, there is a positive probability that a particular $r'$ ball is \emph{covered}.

\begin{lemma}
\label{lem:positive-measure-euclidean}
For all $\eps > 0$, there exists $r' = r'(\eps, N), \delta_2' = \delta_2'(\eps, N) > 0$ such that for any $p \in \RP^N \setminus \Pos(\eps)$,
\be
\frac{\vol{\sigma\left(G_{B(p, r')}(\Pos)\right)}}{\vol{\sigma\left(S^N\right)}} \ge \delta_2'.
\ee
\end{lemma}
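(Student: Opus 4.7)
The plan is to apply Proposition~\ref{prop:smooth-approximation-exists} with parameter $\eps/2$ to replace $\Pos$ by the smooth convex body $K_0:=\tilde{\Pos}(\eps/2)$, and to argue the entire statement in $\RP^N$ (the transfer to $\R^N$ via $\sigma$ being only a change of variables through the diffeomorphism $\sigma:U\to \R^N$ of Remark~\ref{remark:convex}). For each $p\in \RP^N\setminus \Pos(\eps)$ we isolate a compact ``tube'' $C_\eta(p)\subset \RP^N\setminus \Pos(\eps)$ consisting of those $x$ whose projective line $\ell(x,p)$ has spherical distance at least $\eta$ from $K_0$; continuity of $(x,y)\mapsto \ell(x,y)$ then produces a radius $r(p)>0$ such that $\ell(x,y)$ still misses $K_0$ (hence $\Pos$) for every $x\in C_\eta(p)$ and every $y\in B(p,r(p))$. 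The lemma is then reduced to showing that $r(p)$ and $\vol{C_\eta(p)}$ are bounded below uniformly in $p$ on the compact parameter space.

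\textbf{Construction and positive measure.} Fix $\eta>0$ small and set
\be
C_\eta(p):=\{x\in \RP^N\setminus \Pos(\eps)\ :\ \mathrm{dist}(\ell(x,p),K_0)\ge \eta\}.
\ee
Since $\ell(x,p)=\ell(x',p)$ whenever $x'\in \ell(x,p)$, the set $C_\eta(p)$ is the union of the projective lines through $p$ lying at spherical distance at least $\eta$ from $K_0$, intersected with $\RP^N\setminus \Pos(\eps)$. Writing $U_p^\eta\subset S^{N-1}$ for the set of unit tangent directions at a lift $\tilde p\in S^N$ generating such a line, a polar-coordinate computation centred at $\tilde p$ yields $\vol{C_\eta(p)}=\mu(U_p^\eta)\int_0^\pi \sin^{N-1}(r)\,dr$ up to a bounded boundary correction. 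Since $K_0$ is a compact convex body contained in the open hemisphere $U$ and has spherical distance $\ge \eps/2$ from $p$, its solid angle as seen from $p$ is bounded strictly away from the full solid angle of $S^{N-1}$; a compactness argument on the parameter space yields $\mu(U_p^\eta)\ge c_0=c_0(\eps,N)>0$ uniformly in $p$, and hence $\vol{C_\eta(p)}\ge c_0'>0$ uniformly.

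\textbf{Continuity, uniformity, and conclusion.} The map $(x,y)\mapsto \ell(x,y)$ is jointly continuous on $\RP^N\times \RP^N$ away from the diagonal, and so is $(x,y)\mapsto \mathrm{dist}(\ell(x,y),K_0)$. By compactness of $C_\eta(p)\times \overline{B}(p,\eta/4)$, there exists $r_\eta(p)>0$ with $\mathrm{dist}(\ell(x,y),K_0)\ge \eta/2$ for every $(x,y)$ in this product, which in particular gives $C_\eta(p)\subseteq G_{B(p,r_\eta(p))}(\Pos)$. Both $p\mapsto \vol{C_\eta(p)}$ and $p\mapsto r_\eta(p)$ are lower-semicontinuous on the compact parameter set $\RP^N\setminus \Pos(\eps)$ (using smoothness of $K_0$ and the spherical-distance estimate), so their infima are strictly positive; setting $r':=\inf_p r_\eta(p)$ and defining $\delta_2'>0$ by the change of variables through $\sigma$ (which pushes the spherical volume form to an absolutely continuous measure on $\R^N$ with smooth Jacobian) yields the stated inequality.

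\textbf{Main obstacle.} The principal technical point is the uniform lower bound on the solid angle $\mu(U_p^\eta)$: a fixed compact convex body $K_0\subset U$ subtends from any $p\in \RP^N\setminus \Pos(\eps)$ a solid angle bounded strictly away from the maximum uniformly in $p$. Convexity of $K_0$, the fact that $K_0$ lies in a single open hemisphere, and the spherical-distance bound $\mathrm{dist}(p,K_0)\ge \eps/2$ together yield this estimate, but the argument requires some care due to the nontrivial projective geometry near the ``equator'' $\partial U$, where intuition from Euclidean convex geometry is misleading. Once this uniformity is established, the remaining continuity-plus-compactness machinery completes the proof.
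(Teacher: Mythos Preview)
Your overall strategy --- replacing $\Pos$ by a smooth approximation $K_0$, defining for each $p$ the set $C_\eta(p)$ of points whose line to $p$ stays at distance $\ge\eta$ from $K_0$, and then combining a uniform lower volume bound with a continuity-in-$y$ argument --- is sound and is in the same spirit as the paper's proof. The paper also passes to a smooth approximation and argues by continuity and compactness, but organises things differently: rather than working with the distance function $\mathrm{dist}(\ell,K_0)$, it bounds the good cone from inside by an explicit Euclidean construction (projecting $\sigma(K_0)$ onto the tangent hyperplane at the nearest boundary point and comparing with a spherical cap), extracts from this a concrete ball $B(p_2,r'')\subset g_p$ whose centre and radius depend continuously on $p$, and finishes with a sequential contradiction argument. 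Your formulation via $\mathrm{dist}(\ell(\cdot,\cdot),K_0)$ is arguably cleaner and more direct.

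There is, however, one genuine gap. Your set $C_\eta(p)$ is (after intersecting with $\RP^N\setminus\Pos(\eps)$) a union of projective lines through $p$, and therefore contains points arbitrarily close to $p$. Consequently the product $C_\eta(p)\times\overline{B}(p,\eta/4)$ meets the diagonal of $\RP^N\times\RP^N$, where $(x,y)\mapsto\ell(x,y)$ is not even defined, let alone continuous. In fact the conclusion you draw is false as stated: if $x\in C_\eta(p)$ is very close to $p$ and $y\in B(p,r)$ is chosen so that the direction from $x$ to $y$ points towards $K_0$, then $\ell(x,y)$ will hit $K_0$ no matter how small $r$ is. The fix is easy --- replace $C_\eta(p)$ by $C_\eta(p)\setminus B(p,\rho_0)$ for some fixed $\rho_0>0$, which costs only a bounded amount of volume and keeps the product at positive distance from the diagonal --- but as written the compactness step is incorrect.

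Beyond this, your uniformity claims (lower-semicontinuity of $p\mapsto r_\eta(p)$, and the uniform positivity of $\mu(U_p^\eta)$ for a \emph{single} $\eta$ chosen independently of $p$) are asserted rather than proved; the passage from ``positive for each $p$'' to ``uniformly positive'' with the extra parameter $\eta$ is not automatic. The paper's explicit spherical-cap bound sidesteps the first of these by producing the continuous data $(p_2,r'')$ directly. Your approach can be repaired more economically by running the compactness argument once on the global compact set
\[
\bigl\{(p,x):p,x\in\RP^N\setminus\mathrm{int}(\Pos(\eps)),\ d(p,x)\ge\rho_0,\ \mathrm{dist}(\ell(p,x),K_0)\ge\eta\bigr\},
\]
which yields a uniform $r$ in one stroke and avoids any semicontinuity claim about $r_\eta(p)$.
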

\begin{proof}
 Let $\QQ_n(\eps) = \sigma(\Pos(\eps)) \subseteq \R^N$, and $\tilde{\QQ}(\eps) = \sigma(\tilde{\Pos}(\eps))$ (cf. Proposition \ref{prop:smooth-approximation-exists}). Note that for any $p \in \RP^N$, $g_p(\Pos) \supseteq g_p(\tilde{\Pos}(\eps))$, and for any $B \subseteq \RP^N$, $G_B(\Pos) \supseteq G_B(\tilde{\Pos}(\eps))$ (see Figure~\ref{fig:good-cone-for-approximation} for an illustration).

\begin{illustration}
\begin{figure}

\vspace{5pt}
\begin{tikzpicture}[scale=1.7]
%% Draw Sphere
    \pgfmathsetmacro\R{2} 
	\fill[ball color=\spherecolor,opacity=\sphereopacity] (0,0) circle (\R); % 3D lighting effect

%% Cone of positive quadratic forms
	\fill[fill=\pncolor,opacity=\pnopacity] {[rotate=0] (48:2 and 0.9) arc (48:100:2 and 0.9)} {[rotate=320] arc (115:73.5:2 and 0.5)} {[rotate=220] arc(106.5:136.5:2 and 0.5)}-- cycle;
	\fill[fill=\pncolor,opacity=\pntildeopacity,rounded corners=6.2pt] {[rotate=0] (38:2 and 1.1) arc (38:114.7:2 and 1.1)}  {[rotate=320] arc (129.7:68.5:2 and 0.3)} {[rotate=220] arc(106.5:136.5:2 and 0.5)}-- cycle; %smooth approximation

%% shading good cone
		\fill[fill=\goodconecolor,opacity=\goodconeopacity] {[rotate=95] (180:2) arc (180:126.5:2 and 0.3)} {[rotate=49] arc (240:360:2 and 0.8)} {[rotate=0] arc (49:-85:2)}-- cycle;
		\fill[fill=\goodconecolor,opacity=\goodconeopacity] {[rotate=49] (180:2) arc (180:240:2 and 0.8)} {[rotate=275] arc (306.5:180:2 and 0.3)} {[rotate=0] arc (95:229:2)};
		\fill[fill=\smallergoodconecolor,opacity=0.15] {[rotate=106.5] (180:2) arc (180:124:2 and 0.57)} {[rotate=42] arc (243:360:2 and 0.89)} {[rotate=0] arc (42:-73:2)}-- cycle;
		\fill[fill=\smallergoodconecolor,opacity=0.15] {[rotate=42] (180:2) arc (180:243:2 and 0.89)} {[rotate=106.5] arc (124:0:2 and 0.57)} {[rotate=0] arc (106.5:222:2)}-- cycle;

        \node[above,color=black]  at (1.2,-1) {\large $g_q(\tilde{\Pos}(\eps))$};
        \node[above,color=black]  at (-0.95,-0.8) {\large $g_q(\tilde{\Pos}(\eps))$};

	    \node[above,color=black]  at (1.0,0.35) {\large $\mathcal{P}$};
    	\node[above,color=black]  at (0,0.83) {\small $\tilde{\mathcal{P}}(\eps)$};

%% tangents
	\geodesicA{0}{180}{2}{0.3}{95}{draw,dashed,thick,\nonedgecolor}
	\geodesicA{180}{360}{2}{0.8}{49}{draw,dashed,thick,\nonedgecolor}
	\geodesicA{0}{180}{2}{0.57}{106.5}{draw,thick,dotted,\smallergoodconecolor}
	\geodesicA{180}{360}{2}{0.89}{42}{draw,thick,dotted,\smallergoodconecolor}

%% Label sphere
    \node[above,color=black]  at (0,1.6) {\large $\RP^N$};

%% q
    \vertex{0.8}{1.22}{260}{0}
    \node[above,color=black]  at (-0.2,-1.6) {\large $q$};
\end{tikzpicture}\caption{Illustration of the good cone of $q$ w.r.t. $\tilde{\Pos}(\eps)$. $\tilde{\Pos}(\eps)$ is an approximation of $\Pos$ which is convex and has a smooth boundary, such that $\Pos \subseteq \tilde{\Pos}(\eps) \subseteq \Pos(\eps)$. The dashed lines are geodesics which are tangent to $\Pos$ and incident on $q$, and the dotted lines are geodesics which are tangent to $\tilde{\Pos}(\eps)$ and incident on $q$. Observe that $g_q(\tilde{\Pos}(\eps)) \subseteq g_q(\Pos)$, and consequently, $\vol{g_q(\tilde{\Pos}(\eps))} \le \vol{g_q(\Pos)}$.}
\label{fig:good-cone-for-approximation}
\end{figure}
\end{illustration}

Define the map
\be
\tilde{\alpha}^{s}: S^{N-1} \setminus \mathrm{int}(\QQ(\eps)) \to [0, \infty), \qquad \text{which takes } q \mapsto \frac{\vol{\sigma\left(G_{B(\sigma^{-1}(q), s)}(\tilde{\Pos}(\eps))}\right)}{\vol{S^{N-1}}}.
\ee
To establish the lemma, we need to show that for an appropriately chosen $r$, $\tilde{\alpha}^r$ attains a minimum on its domain. As a first step, we shall show that the map
\be
\tilde{\alpha}^{0}: S^{N-1} \setminus \mathrm{int}(\QQ(\eps)) \to [0, \infty), \qquad \text{which takes } q \mapsto \frac{\vol{\sigma\left(g_{\sigma^{-1}(q)}(\tilde{\Pos}(\eps))}\right)}{\vol{S^{N-1}}},
\ee
is bounded below by a continuous function.

\begin{figure}
  \begin{tikzpicture}[scale=0.5]
	% Convex body
    \draw[decoration={closed contour},decorate] plot[smooth cycle,tension=0.5] coordinates {(0,0) (2,0) (3,1) (0,2)};
	\node[above]  at (0.7,0.35) {\large $\tilde{\QQ}_n(\eps)$};

	% Tangent plane
	\draw[] (3.8,-2.5) -- (3.8,4.5);
	\node[below] at (3.8, -2.5) {\large $T_{q'}(\partial \tilde{\QQ}_n(\eps))$};

	% S^(N-1)
	\draw (3.2,1) circle (5.5);
	\node[above] at (2.5, 5.3) {\large $S^{N-1}$};

	% p
	\fill[] (6.5,1) circle (3pt);
	\node[right] at (6.5,1) {\large $q$};

	%p'	
	\fill[] (3.8,1) circle (3pt);
	\node[left] at (3.8,1) {\large $q'$};

	%projection of convex body
	\draw[dashed,{Circle[black]}->] (0,2.75) -- (3.8,2.75);
	\draw[dashed,{Circle[black]}->] (0.5,-0.9) -- (3.8,-0.9);
	\draw[very thick] (3.8, 2.75) -- (3.8,-0.9);
    \path[-latex] (6, -1) edge [bend right] (3.8,0);
	\node[below] at (6, -1) {\large $\Pi(\tilde{\QQ}_n(\eps))$};

	%angle at p
	\draw (6.5,1) -- (-0.17,5.33);
	\draw (6.5,1) -- (3.8,1);
	\draw (5.5,1) arc (180:115:0.5);
	\node[left] at (5.7,1.4) {\footnotesize $\beta(q)$};
  \end{tikzpicture}
\end{figure}

Let $q'$ be the point shortest to $q$ on $\partial \tilde{\QQ}_n(\eps)$, the boundary of $\tilde{\QQ}_n(\eps)$. Let $\Pi(\tilde{\QQ}_n(\eps))$ be the projection of $\tilde{\QQ}_n(\eps)$ onto $T_{q'}(\partial \tilde{\QQ}_n(\eps))$, the tangent space of $\tilde{\QQ}_n(\eps)$ at $q'$. Observe that
\be
\lambda(q) = \max_{v \in \Pi(\tilde{\QQ}_n)(\eps)} \|v\|_2
\ee
is continuous. Consequently, observe that
\be
\frac{\vol{\sigma\left(g_{\sigma^{-1}(q)}(\tilde{\Pos}_n(\eps))\right)}}{\vol{S^{N-1}}} \ge 1 - \underbrace{\frac{\vol{\text{spherical cap with angle }\tan^{-1}\left(\frac{\lambda(q)}{2\|q - q'\|_2}\right)}}{\vol{S^{N-1}}}}_{\beta(q)}.
\ee
$\beta(q)$ is a continuous function, and thus attains a maximum on $S^{N-1} \setminus \mathrm{int}(\QQ_n(\eps))$ (remember that $\|q - q'\|_2$ can never become $0$ because $q$ is always outside $\Pos(\eps)$) proving that $\tilde{\alpha}^0$ is bounded below by a continuous function that attains a minimum on its domain. 

From this, we have that for every $p \in \RP^N \setminus \Pos(\eps)$, we can find a direction $\vec{v}'$ in $\R^N$ and an angle $\theta$ such that for all directions $\vec{v}$ with $\cos^{-1} \frac{\vec{v} \cdot \vec{v}'}{\|\vec{v}\|_2\|\vec{v}'\|_2} \le \theta$, we have that 
$\ell_{\sigma}(p, \vec{v}) \subseteq \sigma\left(g_p(\tilde{\Pos}(\eps))\right)$, where $\ell_{\sigma}(p, \vec{v})$ denotes the line in $\R^N$ through $\sigma(p)$ in the direction $\vec{v}$. Note that $\vec{v}'$ and $\theta$ depend on $p$ continuously. Let $p_{v'}$ be the point of intersection of the line $\ell_{\sigma}(p, \vec{v}')$ and $S^{N-1}$, and now let $p_2$ be the mid-point on the line joining $p$ and $p_{v'}$. Since $\theta$ depends on $p$ continuously, it has a minimum on $\RP^N \setminus \mathrm{int}(\Pos(\eps))$, and thus we can pick $r'' = r''(\eps, N) > 0$ such that $B(p_2, r'') \subseteq \sigma\left(g_p(\tilde{\Pos}(\eps))\right)$.

Now, for the sake of contradiction, assume that for all $r' > 0$, $\min_{q \in S^{N-1}\setminus \mathrm{int}(\QQ_n(\eps))} \tilde{\alpha}^{r'}(q) = 0$, and let $q$ be the point at which $\tilde{\alpha}^{r'}$ attains the minimum. Then we can find a sequence $(r_n)$, with $r_n \to 0$, and a sequence $(q_n)$, with $q_n \to q$, where $q_n \in B(p, r_n)$, such that for all $n < \infty$, there exists a point $b_n \in B(p_2, r'')$ with $b_n \not\in g_{q_n}$. Since $S^{N-1} \setminus \mathrm{int}({\QQ(\eps)})$ is compact, and $B(p_2, r'')$ is obviously compact as well, this means that $(\lim_{n \to \infty} b_n) \not\in (\lim_{n \to \infty} g_{q_n})$, implying that there is a point in $B(p_2, r'')$ which does not belong to $g_q$, which gives us the contradiction we require.
\end{proof}

\begin{proof}[Proof of Lemma \ref{lem:cover-any-ball}]
Set $r = \min(r', \nicefrac{\eps}{8})$. The proof of the lemma follows by noting that since $\sigma$ is smooth, bijective and angle-preserving (conformal)\footnote{Note that the stereographic projection is not isometric, and thus does not preserve areas. However, angle-preservation is enough for us.}, proving that there is a set of strictly positive measure that is good for all $r$-balls centered in $\RP^N\setminus\Pos(\eps)$ follows from Lemma~\ref{lem:positive-measure-euclidean}. This is because since $\delta_2' > 0$, the pre-image under $\sigma$ of any set of measure at least $\delta_2'$ will be strictly positive ($\delta_2$ will be the measure of the pre-image, under $\sigma$, of the set in $\R^N$ which attains the minimum measure $\delta_2'$).
\end{proof}

The lemma below gives bounds on the number of samples from $\RP^N \setminus \Pos(\eps)$ required to cover all of $\RP^N \setminus \Pos(\eps)$ with good cones.

\begin{lemma} 
\label{lem:geometric-coupon-collector}
Let $q'_i$ denote the $i^\text{th}$ sampled point. For any $\eps > 0$, define $C = C(\eps)$ to be a random variable that denotes the minimum index such that the union of the good cones of $q'_1 \ldots q'_C$ covers $\RP^N \setminus \Pos(\eps)$, i.e.
\be
\bigcup_{i=1}^C g_{q'_i}(\Pos_n) \supseteq \RP^N \setminus \Pos(\eps).
\ee
Then, for all $\delta_3 > 0$, there exists $\gamma = \gamma(\eps, \delta_3, N)$ such that
\be
\PR{C \le \gamma} \ge 1 - \delta_3.
\ee
\end{lemma}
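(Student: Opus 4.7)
The plan is a standard coupon-collector argument built on top of Lemma~\ref{lem:cover-any-ball}. First, I would fix the radius $r=r(\eps,N)$ provided by Lemma~\ref{lem:cover-any-ball}. Since $\RP^N\setminus \Pos(\eps)$ is compact, I can cover it by a finite collection $B(p_1,r),\ldots,B(p_M,r)$ of open balls centered at points $p_i\in \RP^N\setminus \Pos(\eps)$, where $M=M(\eps,N)$ depends only on $\eps$ and $N$. Each ball $B(p_i,r)$ will play the role of a ``coupon''.

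Next, I would interpret Lemma~\ref{lem:cover-any-ball} probabilistically. For a single uniform sample $q'$ from $\RP^N\setminus \Pos(\eps)$, the event that $q'\in G_{B(p_i,r)}(\Pos)$, i.e. that the good cone $g_{q'}(\Pos)$ already contains $B(p_i,r)\cap(\RP^N\setminus \Pos(\eps))$, has probability at least
\be
p^{\ast}\;=\;\frac{\vol{G_{B(p_i,r)}(\Pos)}}{\vol{\RP^N\setminus \Pos(\eps)}}\;\ge\;\delta_2\cdot\frac{\vol{\RP^N}}{\vol{\RP^N\setminus \Pos(\eps)}}\;>\;0,
\ee
a constant depending only on $\eps$ and $N$. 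Thus each sample independently ``collects'' any prescribed coupon $B(p_i,r)$ with probability at least $p^{\ast}$.

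Then, the standard union-bound / geometric-tail estimate gives that after $k$ independent samples the probability that some ball $B(p_i,r)$ remains uncollected is at most $M(1-p^{\ast})^{k}$. Setting
\be
\alpha\;=\;\alpha(\eps,\delta_3,N)\;:=\;\left\lceil \frac{\log(M/\delta_3)}{\log\bigl(1/(1-p^{\ast})\bigr)}\right\rceil
\ee
makes this tail at most $\delta_3$. Once every $B(p_i,r)$ has been collected by some $q'_j$, we have $\bigcup_{j=1}^{\alpha} g_{q'_j}(\Pos)\supseteq\bigcup_{i=1}^{M}\bigl(B(p_i,r)\cap(\RP^N\setminus \Pos(\eps))\bigr)=\RP^N\setminus \Pos(\eps)$, so $C\le\alpha$ with probability at least $1-\delta_3$, as required.

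I do not expect any deep obstacle here: the argument is essentially bookkeeping on top of Lemma~\ref{lem:cover-any-ball}. The only care needed is (i) noting that $M$ depends only on $\eps$ and $N$ by compactness of $\RP^N\setminus \Pos(\eps)$ and the fact that $r$ is bounded below away from zero, and (ii) matching conventions about whether the samples are drawn from the uniform distribution on $\RP^N$ or on $\RP^N\setminus \Pos(\eps)$; in either case the conditional probability of hitting $G_{B(p_i,r)}(\Pos)$ on a given trial is bounded below by a positive constant, which is all the coupon-collector step uses.
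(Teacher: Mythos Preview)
Your proposal is correct and follows essentially the same coupon-collector strategy as the paper: cover $\RP^N\setminus\Pos(\eps)$ by finitely many $r$-balls using compactness, invoke Lemma~\ref{lem:cover-any-ball} to get a uniform lower bound on the probability that a single sample covers any fixed ball, and then bound the waiting time until all balls are covered. The only cosmetic difference is that the paper bounds $\EX{C}$ by summing the expectations of geometric waiting times and then applies Markov's inequality, whereas you use a direct union bound on the geometric tails; both yield an $\alpha$ depending only on $\eps,\delta_3,N$.
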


\begin{proof}
Take a covering of $\RP^N \setminus \Pos(\eps)$ with $r$-balls (where $r$ is from Lemma \ref{lem:cover-any-ball}) of size $Q = Q(\eps, N)$, and let the $Q$ balls that cover $\RP^N \setminus \Pos(\eps)$ be $B_1, \ldots, B_Q$. Remember that the conditional distribution of sampling from $\RP^N \setminus \Pos(\eps)$ is uniform. Let $C_i$ denote the additional number of points needed to be sampled from $\RP^N \setminus \Pos(\eps)$ such that $B_i$ is covered, given that balls $B_1, \ldots B_{i-1}$ are already covered by $\bigcup_{i=1}^{C_{i-1}} g_{q'_i}$. By definition,
\be
C \le \sum_{i=1}^Q C_i.
\ee
When balls $B_1, \ldots B_{i-1}$ are already covered, $B_i$ could already be covered. Let the probability that $B_i$ is already covered be $p_i$. If not covered, by Lemma~\ref{lem:cover-any-ball}, each $C_i$ is a geometric random variable with parameter $\mu_i \ge \delta_2$. This means
  \be
    C_i = \begin{cases} 0 &\mbox{with prob. } p_i \\ 
                                  \mathrm{\text{Geom}}(\mu_i) & \mbox{with prob. } 1 - p_i
              \end{cases},
  \ee
where $\text{Geom}(\mu)$ refers to the geometric distribution with mean $\frac{1}{\mu}$. Thus
\be
\EX{C_i} = 0\cdot p_i + (1-p_i) \cdot \frac{1}{\mu_i} \le \frac{1}{\mu_i} \le \frac{1}{\delta_2},
\ee
and by linearity of expectation, in turn, we get that
\be
\label{eqn:expected-number-to-cover}
\EX{C} \le \frac{Q}{\delta_2}.
\ee
Set $\gamma = \frac{Q}{\delta_2\delta_3}$. Applying Markov's inequality on $C$, and using Equation~\eqref{eqn:expected-number-to-cover}, the lemma follows.
\end{proof}

\begin{proof}[Proof of Lemma \ref{lem:covering-point}]
Lemma \ref{lem:geometric-coupon-collector} shows that we will have a covering of $\RP^N \setminus \Pos(\eps)$ with \emph{good sets}, with probability at least $1 - \delta_3$, if we have $s_e(\eps) \ge \gamma$. To complete the proof of Lemma~\ref{lem:covering-point}, we have to set $\tilde{s}_2$ appropriately so that if $s \ge \tilde{s}_2$, then $s_e(\eps) \ge \gamma$. Conditioning on $\Omega_1(\eps)$, it is clear that if $s \ge k\cdot\gamma\left(\frac{\vol{\RP^N}}{\vol{\RP^N} - \vol{\Pos(\eps)}}\right)$, for an appropriately chosen constant $k$, then $s_e(\eps) \ge \gamma$. Thus, conditioned on $\Omega_1(\eps)$, setting $\tilde{s}_2 = k\cdot\gamma\left(\frac{\vol{\RP^N}}{\vol{\RP^N} - \vol{\Pos(\eps)}}\right)$ ensures we have a covering of $\RP^N \setminus \Pos(\eps)$ with \emph{good sets} with probability at least $1 - \delta_3$. 

Since, $\RP^N \setminus \Pos(\eps)$ is covered, any new point that is added to $\RP^N \setminus \Pos(\eps)$ will be connected to at least one of the existing $\gamma$ vertices, which in turn means that the number of connected components of the graph stays fixed as $\gamma$. The lemma follows by setting $a = k\left(\frac{\vol{\RP^N}}{\vol{\RP^N} - \vol{\Pos(\eps)}}\right)$.
\end{proof}

\begin{proof}[Proof of Theorem~\ref{thm:conn-comp-vis-graph}]
We shall prove that, for all $\eps, \delta, \delta_1$, $\lim_{s \to \infty} \frac{\EX{b_0(\graph(N, \Pos, s))}}{s}$ is bounded from above by $\frac{\vol{\Pos_n}}{\vol{\RP^N}}$ plus some terms which depend on $\eps, \delta, \delta_1$. We know that the number of connected components of a graph is bounded from above by the sum of the number of connected components of subgraphs of the graph that form a decomposition of the original graph. Consider the decomposition of the graph from earlier, i.e., $\graph(N, \Pos, s)$ is decomposed into $\graph_1(N, \Pos, s, \eps)$, which is the subgraph of $\graph(N, \Pos, s)$ restricted to the vertices in $\RP^N \setminus \Pos(\eps)$, $\graph_2(N, \Pos, s, \eps)$, which is the subgraph of $\graph(N, \Pos, s)$ restricted to the vertices in $\Pos(\eps) \setminus \Pos$, and $\graph_3(N, \Pos, s)$, which is the subgraph of $\graph(N, \Pos, s)$ restricted to the vertices in $\Pos$. Thus,
\be
\label{eqn:G-decompose}
\EX{b_0(\graph(N, \Pos, s))} \le \EX{b_0(\graph_1(N, \Pos, s, \eps))} + \EX{b_0(\graph_2(N, \Pos, s, \eps))} + \EX{b_0(\graph_3(N, \Pos, s))}.
\ee

To bound $\EX{b_0(\graph_1(N, \Pos, s, \eps))}$, we break the sample space into three disjoint evens, i.e., $\Omega_1(\eps) \cap \left(b_0(\graph_1(N, \Pos, s, \eps)) \le \nicefrac{\tilde{s}_2}{a}\right)$, $\Omega_1(\eps) \cap \left(b_0(\graph_1(N, \Pos, s, \eps)) \le \nicefrac{\tilde{s}_2}{a}\right)^c$, and $\Omega_1(\eps)^c$, where $\tilde{s}_2$ and $a$ are from Lemma \ref{lem:covering-point}. Thus, for any $\eps > 0$, we have
\begin{align}
\EX{b_0(\graph_1(N, \Pos, s))} &\le \underbrace{\int_{\Omega_1(\eps) \cap \left(b_0(\graph_1(N, \Pos, s, \eps)) \le \nicefrac{\tilde{s}_2}{a}\right)} b_0(\graph_1(N, \Pos, s, \eps)) \de{\omega}}_{A} + \underbrace{\int_{\Omega_1(\eps)^c} b_0(\graph_1(N, \Pos, s, \eps)) \de{\omega}}_{B} \\ &\qquad+ \underbrace{\int_{\Omega_1(\eps) \cap \left(b_0(\graph_1(N, \Pos, s, \eps)) \le \nicefrac{\tilde{s}_2}{a}\right)^c} b_0(\graph_1(N, \Pos, s, \eps)) \de{\omega}}_{C}. \label{eqn:G1}
\end{align}
To bound $\EX{b_0(\graph_2(N, \Pos, s, \eps))}$, we break the sample space into two disjoint events, i.e. $\Omega_2(\eps)$ and $\Omega_2(\eps)^c$. Thus, for any $\eps > 0$, we have
\be
\EX{b_0(\graph_2(N, \Pos, s))} \le \underbrace{\int_{\Omega_2(\eps)} b_0(\graph_2(N, \Pos, s, \eps))\de{\omega}}_{D} 
+ \underbrace{\int_{\Omega_2(\eps)^c} b_0(\graph_2(N, \Pos, s, \eps))\de{\omega}}_{E}. \label{eqn:G2}
\ee
Finally, to bound $\EX{b_0(\graph_3(N, \Pos, s))}$, we break the sample space into two disjoint events, i.e. $\Omega_3$ and $\Omega_3^c$. Thus, we have
\be
\EX{b_0(\graph_3(N, \Pos, s))} \le \underbrace{\int_{\Omega_3} b_0(\graph_3(N, \Pos, s))\de{\omega}}_{F} + \underbrace{\int_{\Omega_3^c} b_0(\graph_3(N, \Pos, s))\de{\omega}}_{G}. \label{eqn:G3}
\ee
Thus, from \eqref{eqn:G1}, \eqref{eqn:G2}, \eqref{eqn:G3}, and \eqref{eqn:G-decompose}, we have that
\be
\label{eqn:ub-expected-connected-component}
\EX{b_0(\graph(N, \Pos, s))} \le A + B + C + D + E + F + G.
\ee

Let us now estimate $A, B, C, D, E, F, G$. Because we are integrating over the space where $b_0(\graph_1(N, \Pos, s, \eps)) \le \nicefrac{\tilde{s}_2}{a}$, for all $\eps > 0$, obviously,
\be
\label{eqn:A}
A \le \frac{\tilde{s}_2}{a}.
\ee
We apply the trivial bound of $s$ on $b_0(\graph_1(N, \Pos, s, \eps))$ to get that, for all $\delta > 0$, $\eps > 0$,
\be
\label{eqn:B}
B \le \PR{\Omega_1(\eps)^c}s \le \frac{\delta}{3}s,
\ee
as long as $s \ge \tilde{s}_1 = \tilde{s}_1(\delta, \beta, \eps, N)$ (cf. Proposition~\ref{prop:good-event-probability}). By Lemma~\ref{lem:covering-point}, for all $\delta_1 > 0$, $\eps > 0$, if $s > \tilde{s}_2 = \tilde{s}_2(\eps, \delta_1, N)$, $\PR{b_0(\graph_1(N, \Pos, s, \eps)) > \nicefrac{\tilde{s}_2}{a} \condition \Omega_1(\eps)} < \delta_1$, for some specific $a = a(\eps, N)$. Thus, for $\delta_1 > 0$, $\eps > 0$,
\be
\label{eqn:C}
C \le \PR{\Omega_1(\eps)}\cdot\PR{b_0(\graph_1(N, \Pos, s, \eps)) > \nicefrac{\tilde{s}_2}{a} \condition \Omega_1(\eps)}s \le \PR{\Omega_1(\eps)}\delta_1 s \le \delta_1 s.
\ee
Trivially, $b_0$ of a graph is bounded from above by the number of vertices in the graph. Thus, for all $\eps > 0$,
\begin{align}
D &\le \PR{\Omega_2(\eps)}\left(s\cdot\left(\frac{\vol{\Pos(\eps)\setminus \Pos}}{\vol{\RP^N}}\right) + s^{\nicefrac{1}{2} + \beta}\cdot\left(\frac{\vol{\Pos(\eps)\setminus \Pos}}{\vol{\RP^N}}\right)\right) \\
&\le \left(s + s^{\nicefrac{1}{2} + \beta}\right)\cdot\left(\frac{\vol{\Pos(\eps)\setminus \Pos}}{\vol{\RP^N}}\right). \label{eqn:D}
\end{align}
At the same time, similar to \eqref{eqn:B}, for all $\delta > 0$, $\eps > 0$
\be 
\label{eqn:E}
E \le s. \PR{\Omega_2(\eps)^c} \le s \frac{\delta}{3},
\ee
if $s \ge \tilde{s}_1 = \tilde{s}_1(\delta, \beta, \eps, N)$ (by Proposition~\ref{prop:good-event-probability}). By Equation~\eqref{eqn:relative-vol-postive-definite-cone}, we have that
\be
\label{eqn:F}
F \le \PR{\Omega_3}\left(s\cdot\left(\frac{\vol{\Pos}}{\vol{\RP^N}}\right) + s^{\nicefrac{1}{2} + \beta}\cdot\left(\frac{\vol{\Pos}}{\vol{\RP^N}}\right)\right) \le (s + s^{\nicefrac{1}{2} + \beta})\cdot\left(\frac{\vol{\Pos}}{\vol{\RP^N}}\right).
\ee
Finally, again, for all $\delta > 0$, $\eps > 0$, if $s \ge \tilde{s}_1 = \tilde{s}_1(\delta, \beta, \eps, N)$,
\be
\label{eqn:G}
G \le s. \PR{\Omega_3^c} \le s \frac{\delta}{3}.
\ee
Putting \eqref{eqn:A}, \eqref{eqn:B}, \eqref{eqn:C}, \eqref{eqn:D}, \eqref{eqn:E}, \eqref{eqn:F}, and \eqref{eqn:G} in \eqref{eqn:ub-expected-connected-component}, we have that for all $\eps > 0$, $\delta > 0$, $\delta_1 > 0$,
\be
\label{eqn:expected-connected-component-final}
\lim_{s \to \infty} \frac{\EX{b_0(\graph(N, \Pos, s))}}{s} \le \underbrace{0}_{\nicefrac{A}{s}} + \underbrace{\frac{\delta}{3}}_{\nicefrac{B}{s}} + \underbrace{\delta_1}_{\nicefrac{C}{s}} + \underbrace{\left(\frac{\vol{\Pos(\eps)\setminus \Pos}}{\vol{\RP^N}}\right)}_{\nicefrac{D}{s}} + \underbrace{\frac{\delta}{3}}_{\nicefrac{E}{s}} + \underbrace{\frac{\vol{\Pos}}{\vol{\RP^N}}}_{\nicefrac{F}{s}} + \underbrace{\frac{\delta}{3}}_{\nicefrac{G}{s}}.
\ee
Since Equation \eqref{eqn:expected-connected-component-final} is true for any choice of $\eps, \delta, \delta_1$, we have that
\be
\lim_{s \to \infty} \frac{\EX{b_0(\graph(N, \Pos, s))}}{s} \le \frac{\vol{\Pos}}{\vol{\RP^N}}.
\ee
\end{proof}

%%\subsection{A Ramsey-type result}
\subsection{On the probability of having large cliques in the complement of the quadrics graph $\Gamma$}
%%sb changes
\hide{
The quadrics graph, i.e. $\Gamma$, is a subgraph of $\graph(N, \Pos_n, s)$. 
It is formed by discarding the vertices that fall inside $\Pos_n$ (because the zero sets of quadrics inside $\Pos_n$ are empty). In $\Gamma$, an edge is placed between vertices if the corresponding quadrics intersect. 
Quadrics graphs are random  objects  but for 
for any fixed value of $n$, 
those that occur with positive  probability are  examples of semi-algebraic graphs which we define below.
}
%%sb

%%tbc
We first recall the definition (Definition~\ref{defn:quadrics-graph}) of 
the random quadrics graph $\mathcal{H}(N, \Pos_n, s)$  which is a subgraph of $\graph(N, \Pos_n, s)$. 
Quadrics graphs are random  objects  but for any fixed value of $n$, 
those that occur with positive  probability are  examples of semi-algebraic graphs which we define below.

\begin{definition}
Let $R \subset \mathbb{R}^d \times \mathbb{R}^d$ be a symmetric semi-algebraic relation i.e.  $R$ is a semi-algebraic subset of $\mathbb{R}^d \times \mathbb{R}^d$, and $(x,y) \in R  \Leftrightarrow (y,x) \in R$ for every $x,y \in \mathbb{R}^d$. We say a (undirected) graph $G = (V,E)$ is a $R$-semi-algebraic graph, if there exists a  map $\phi:V \rightarrow \mathbb{R}^d$,  such that for any $u,v \in V,   (u,v) \in E  \Leftrightarrow (\phi(u),\phi(v)) \in R$.   
\end{definition}

The property of existence of large cliques in  semi-algebraic graphs or their complements have been studied
by various authors.
For example,
Alon et al. \cite{Alon-Pach-et-al} prove the following Ramsey-theoretic  theorem.

\begin{theorem}[Alon et al. \cite{Alon-Pach-et-al}]
\label{thm:alon-semialgebraic-graph}
For any symmetric semi-algebraic relation $R$ the following is true. There exists a constant $\delta = \delta(R) > 0$, such that for any $R$-semi-algebraic graph with $n$ vertices one of the following is true.
\begin{enumerate}
\item There exists a clique of size $n^{\delta}$ in $G$.
\item The complement of $G$ has a clique of size $n^{\delta}$. \label{cond:complement-clique}
\end{enumerate}
\end{theorem}

It is clear from its definition that for fixed $n$  each quadrics graph 
%%$\Gamma$, 
$\mathcal{H}(N, \Pos_n, s)$ 
is an $R$-semi-algebraic graph for a fixed  appropriately defined symmetric semi-algebraic relation $R$.
In view of  Theorem~\ref{thm:alon-semialgebraic-graph} it is natural to ask the probability of a quadric graph or its complement  to have a large clique.

The following result rules out with probability $1$,    in the limit as $s \rightarrow \infty$, the existence of large cliques in the complement graph of $\Gamma$ (where large means of size at least a constant fraction of $s$).

\begin{corollary}[of Theorem \ref{thm:quadrics}]
\label{cor:quadrics-graph-ramsey}
Let $\mathcal{H}(N, \Pos_n, s)$ be the random quadrics graph as per Definition \ref{defn:quadrics-graph}. Let $\mathcal{H}(N, \Pos_n, s)^c$ be the complement of the graph on the same set of vertices. Then, for any $\eps > 0$,
\be
\lim_{s \to \infty} \PR{\mathcal{H}(N, \Pos_n, s)^c \text{ contains a clique of size }\eps s} = 0.
\ee
\end{corollary}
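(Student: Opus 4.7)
The plan is to reduce the statement to Theorem \ref{thm:quadrics} via Markov's inequality, together with a structural argument that turns a large clique in $\Gamma^c$ into a lower bound on $b_0(\Gamma)$. The starting point is that by Theorem \ref{thm:quadrics} we have $\EX{b_0(\Gamma)}=o(s)$, so Markov's inequality gives
\be \PR{b_0(\Gamma)\ge \eps s/2}\le \frac{2\,\EX{b_0(\Gamma)}}{\eps s}\longrightarrow 0. \ee
Thus it suffices to argue that, up to an event of vanishing probability, the existence of a clique of size $\eps s$ in $\Gamma^c$ forces $b_0(\Gamma)\ge \eps s/2$.

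The first step is the tautological translation: a clique of size $\eps s$ in $\Gamma^c$ is an independent set of the same size in $\Gamma$, namely $\eps s$ non-empty Kostlan quadrics $q_{i_1},\ldots,q_{i_{\eps s}}$ with pairwise disjoint zero sets $Z(q_{i_j})\subset \RP^n$. Since each $Z(q_{i_j})$ is connected and they are pairwise disjoint, their union $\bigcup_j Z(q_{i_j})$ has exactly $\eps s$ connected components. Adding the remaining $Z(q_j)$'s can only (i) create further components disjoint from all the $Z(q_{i_\ell})$'s, or (ii) merge some of the $\eps s$ disjoint pieces through chains of intersections with non-clique quadrics.

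The main step is to control the merging in (ii): call a quadric $q_j$ with $j \notin \{i_1,\ldots,i_{\eps s}\}$ a \emph{hub} if $Z(q_j)$ intersects at least two of the clique zero sets. Each hub reduces the number of connected components coming from the clique by at most one per extra intersection it creates, so the total reduction is bounded by the sum $\sum_j (a_j-1)^+$, where $a_j$ is the number of clique zero sets intersected by $Z(q_j)$. Using Calabi's Theorem \ref{thm:calabi-characterization}, each pair $(q_j, q_{i_\ell})$ has its corresponding line in $\RP^N$ meeting or avoiding $\mathcal{P}_n$, and the probabilities of these geometric events can be quantified using the good-cone volume estimates of Lemma \ref{lem:cover-any-ball} together with the tail bounds of Proposition \ref{prop:good-event-probability}. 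I would use these to show that, conditionally on the clique event, $\EX{\sum_j (a_j-1)^+}=o(s)$, so that by Markov the total number of merges is below $\eps s/2$ with probability tending to one. This yields $b_0(\Gamma)\ge \eps s/2$ with high probability on the clique event, and combined with the Markov bound above closes the argument.

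The hardest part is making the hub analysis quantitative under the correlations induced by conditioning on the clique event. The conditioning biases the positions of the clique quadrics toward configurations for which all pairwise lines meet $\mathcal{P}_n$, and one has to check that the geometric estimates (continuity of the good-cone volume, bounds away from $\mathcal{P}(\eps)$, etc.) survive this conditioning uniformly in $s$. I expect this is doable by conditioning first on which of the clique quadrics lie outside the $\eps$-neighborhood $\mathcal{P}(\eps)$ and applying the obstacle-graph machinery of Section \ref{sec:avg-conn-comp} to the restricted configuration, mirroring the decomposition into the events $\Omega_1(\eps),\Omega_2(\eps),\Omega_3$ already used in the proof of Theorem \ref{thm:conn-comp-vis-graph}.
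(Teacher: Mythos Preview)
The paper's argument is a three-line Markov bound: with $\Omega_a$ the clique event, it writes
\[
0=\lim_{s\to\infty}\frac{\EX{b_0(\Gamma)}}{s}\ge\lim_{s\to\infty}\frac{\int_{\Omega_a}b_0(\Gamma)\de{\omega}}{s}\ge\eps\cdot\lim_{s\to\infty}\PR{\Omega_a},
\]
justifying the last inequality by the claim that on $\Omega_a$ the $\eps s$ clique vertices are \emph{isolated} in $\Gamma$, hence $b_0(\Gamma)\ge\eps s$ deterministically. You are right to flag this step: a clique in $\Gamma^c$ is an independent set in $\Gamma$, not a set of isolated vertices, and nothing a priori prevents the $\eps s$ pairwise-disjoint zero sets from being linked through non-clique quadrics. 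So the paper's proof is shorter than yours precisely because it glosses over the merging issue you try to address.

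That said, your hub repair has a genuine gap of its own. You need $\EX{\sum_j(a_j-1)^+}=o(s)$ \emph{conditionally on} $\Omega_a$, but the tools you invoke --- Lemma~\ref{lem:cover-any-ball} and Proposition~\ref{prop:good-event-probability} --- are statements about the unconditional uniform law, and conditioning on the rare event $\Omega_a$ destroys both the independence and the uniformity they rely on. The $\Omega_1(\eps),\Omega_2(\eps),\Omega_3$ decomposition you propose to reuse controls \emph{counts} of points in fixed regions, not the conditional geometry of which pencils meet $\mathcal{P}_n$; it gives no handle on how many clique zero sets a typical non-clique quadric meets once you have conditioned on $\Omega_a$. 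Nothing rules out, for instance, that under this conditioning the clique quadrics cluster near $\partial\mathcal{P}_n$ in such a way that a positive fraction of the remaining quadrics become hubs. As written, the hub step is a hope rather than an argument, and I do not see how to complete it with the machinery already available in the paper.
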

\begin{proof}
Let $\Omega_a$ denote the event that there exists a clique of size $\eps s$ in 
%%sb
%%$\Gamma^c$. 
$\mathcal{H}(N, \Pos_n, s)^c$. 
Thus we have
\begin{align}
 0 &= \lim_{s \to \infty} \frac{\EX{b_0(
 %%\Gamma
 \mathcal{H}(N, \Pos_n, s)
 )}}{s}  \eqcomment{by Theorem \ref{thm:quadrics}}\\
&= \lim_{s \to \infty} \frac{\int_{\Omega_a}b_0(
%%\Gamma
\mathcal{H}(N, \Pos_n, s)
) \de{\omega} + \int_{\Omega_a^c}b_0(\Gamma) \de{\omega}}{s} \\
&\ge \lim_{s \to \infty} \frac{\eps s \cdot \PR{\Omega_a} + 0}{s}. \label{eqn:clique-prob-ub}
\end{align}
The last inequality follows by noting that if the complement of 
%%$\Gamma$ 
$\mathcal{H}(N, \Pos_n, s)$ 
contains a clique of size $\eps s$, it means that all $\eps s$ vertices were isolated in 
%%$\Gamma$, 
$\mathcal{H}(N, \Pos_n, s)$,
in turn implying that 
%%$\Gamma$
$\mathcal{H}(N, \Pos_n, s)$ 
 has at least $\eps s$ connected components. By noting that $\lim_{s \to \infty} \PR{\Omega_a}$ is obviously non-negative, and from \eqref{eqn:clique-prob-ub}, we deduce that
\be
\lim_{s \to \infty} \PR{\Omega_a} = 0,
\ee
or in other words
\be
\lim_{s \to \infty} \PR{
%%\Gamma^c 
\mathcal{H}(N, \Pos_n, s)^c
\text{ contains a clique of size }\eps s} = 0.
\ee
\end{proof}

Juxtaposing with Theorem \ref{thm:alon-semialgebraic-graph}, Corollary \ref{cor:quadrics-graph-ramsey} shows that in the quadrics random graph the probability of having an independent set of size at least a fixed fraction of the number of vertices  goes to $0$ with 
%%$n$
$s$.   
This is to be compared with condition (\ref{cond:complement-clique}) in Theorem \ref{thm:alon-semialgebraic-graph},    which posits the existence of an independent set of size at least some fixed polynomial in the number of vertices.  

\bibliographystyle{amsplain}
\bibliography{arrangements}

\appendix
\section{Proof of Proposition \ref{prop:good-event-probability}}
\label{sec:proof-of-tail-bounds}

We will need the additive Chernoff-Hoeffding bound for binomial random variables.

\begin{proposition}[See for e.g. Boucheron et al.~\cite{boucheron2013concentration}]
\label{prop:binomial-concentration}
Let random variable $X \sim \text{Binomial}(n, p)$, and $t \in [0,1]$. Then
\be
\PR{\left|X - \EX{X}\right| > t\cdot\EX{X}} < 2e^{-\nicefrac{t^2\EX{X}}{3}}.
\ee
Consequently, if $n > \tilde{n} = \tilde{n}(t, \delta, p) = \frac{3\log{\nicefrac{2}{\delta}}}{pt^2}$,
\be
\PR{\left|X - \EX{X}\right| > t\cdot\EX{X}} < \delta.
\ee
This also implies that
\be
\label{eqn:concentration-limit}
\lim_{n \to \infty} \PR{\left|X - \EX{X}\right| > t\cdot\EX{X}} = 0.
\ee
\end{proposition}

\begin{proof}[Proof of Proposition \ref{prop:good-event-probability}]
Obviously $s_e(\eps)$, $s_a(\eps)$ and $s_p$ are Binomial random variables. Note that
\be
\EX{s_e(\eps)} = s\cdot\left(1 - \frac{\vol{\Pos(\eps)}}{\vol{\RP^N}}\right).
\ee
Putting $t = s^{\beta - \nicefrac{1}{2}}$ and substituting $\delta$ with $\nicefrac{\delta}{3}$ in Proposition~\ref{prop:binomial-concentration}, we have that if $s > \left(\frac{3\log{\nicefrac{6}{\delta}}}{1 - \frac{\vol{\Pos(\eps)}}{\vol{\RP^N}}}\right)^{\nicefrac{1}{2\beta}}$,
\be
\label{eqn:se-tail}
\PR{\Omega_1(\eps)^c} \le \PR{\left|s_e(\eps) - s\cdot\left(1 - \frac{\vol{\Pos(\eps)}}{\vol{\RP^N}}\right)\right| > s^{\nicefrac{1}{2} + \beta}\cdot\left(1 - \frac{\vol{\Pos(\eps)}}{\vol{\RP^N}}\right)} < \frac{\delta}{3}.
\ee
Similarly, by noting that
\be
\EX{s_a(\eps)} = s \cdot\left(\frac{\vol{\Pos(\eps) \setminus \Pos}}{\vol{\RP^N}}\right),
\ee
and
\be
\EX{s_p} = s \cdot \left(\frac{\vol{\Pos}}{\vol{\RP^N}}\right),
\ee
again by Proposition~\ref{prop:binomial-concentration}, we have that, if $s > \left(\frac{3\log{\nicefrac{6}{\delta}}}{\frac{\vol{\Pos(\eps) \setminus \Pos}}{\vol{\RP^N}}}\right)^{\nicefrac{1}{2\beta}}$,
\be
\label{eqn:sa-tail}
\PR{\Omega_2(\eps)^c} \le \PR{\left|s_a(\eps) - s \cdot\left(\frac{\vol{\Pos(\eps) \setminus \Pos}}{\vol{\RP^N}}\right)\right| > s^{\nicefrac{1}{2} + \beta}\cdot\left(\frac{\vol{\Pos(\eps) \setminus \Pos}}{\vol{\RP^N}}\right)} < \frac{\delta}{3},
\ee
and, if $s > \left(\frac{3\log{\nicefrac{6}{\delta}}}{\frac{\vol{\Pos}}{\vol{\RP^N}}}\right)^{\nicefrac{1}{2\beta}}$,
\be
\label{eqn:sp-tail}
\PR{\Omega_3^c} \le \PR{\left|s_p - s \cdot \left(\frac{\vol{\Pos}}{\vol{\RP^N}}\right)\right| > s^{\nicefrac{1}{2} + \beta}\cdot\left(\frac{\vol{\Pos}}{\vol{\RP^N}}\right)} < \frac{\delta}{3}.
\ee
Setting $\tilde{s}_1 = \mathrm{max}\left\{\left(\frac{3\log{\nicefrac{6}{\delta}}}{1 - \frac{\vol{\Pos(\eps)}}{\vol{\RP^N}}}\right)^{\nicefrac{1}{2\beta}}, \left(\frac{3\log{\nicefrac{6}{\delta}}}{\frac{\vol{\Pos(\eps) \setminus \Pos}}{\vol{\RP^N}}}\right)^{\nicefrac{1}{2\beta}}, \left(\frac{3\log{\nicefrac{6}{\delta}}}{\frac{\vol{\Pos}}{\vol{\RP^N}}}\right)^{\nicefrac{1}{2\beta}}\right\}$, observe that if $s \ge \tilde{s}_1$,
\begin{align}
\PR{\Omega_1(\eps) \cap \Omega_2(\eps) \cap \Omega_3} &= 1 - \PR{\Omega_1(\eps)^c \cup \Omega_2(\eps)^c \cup \Omega_3^c} \\
&\ge 1 - \PR{\Omega_1(\eps)^c} + \PR{\Omega_2(\eps)^c} + \PR{\Omega_3^c} \eqcomment{by a union bound}\\
&\ge 1 - 3\cdot\frac{\delta}{3} = 1 - \delta \eqcomment{by \eqref{eqn:se-tail}, \eqref{eqn:sa-tail}, and \eqref{eqn:sp-tail}}.
\end{align}
Finally, from \eqref{eqn:concentration-limit}, we can easily deduce that
\be
\lim_{s \to \infty} \PR{\Omega_1(\eps) \cap \Omega_2(\eps) \cap \Omega_3} = 1.
\ee
This completes the proof of Proposition \ref{prop:good-event-probability}.
\end{proof}

\end{document}